\DeclarePairedDelimiter{\ceil}{\lceil}{\rceil}
\def\red{\textcolor[rgb]{1,0,0}}
\newtheorem{theorem}{Theorem}
\newtheorem{lemma}{Lemma}
\newtheorem{cor}{Corollary}
\newtheorem{dfn}{Definition}
\newcommand{\tabincell}[2]{\begin{tabular}{@{}#1@{}}#2\end{tabular}}
\begin{document}
%
\title{Tensor Completion via a Low-Rank Approximation Pursuit}
%
%
%

\author{An-Bao~Xu  
\thanks{This work was supported by the National Natural Science Foundation of China under Grant 11801418.}
\thanks{The author is with the College of Mathematics and Physics, Wenzhou University, Zhejiang 325035, China (e-mail: xuanbao@wzu.edu.cn).}}

\maketitle

\begin{abstract}
This paper considers the completion problem for a tensor (also referred to as a multidimensional array) from limited sampling. Our greedy method is based on extending the low-rank approximation pursuit (LRAP) method for matrix completions to tensor completions. The method performs a tensor factorization using the tensor singular value decomposition (t-SVD) which extends the standard matrix SVD to tensors. The t-SVD leads to a notion of rank, called tubal-rank here. We want to recreate the data in tensors from low resolution samples as best we can here. To complete a low resolution tensor successfully  we assume that the given tensor data has low tubal-rank. For tensors of low tubal-rank, we establish convergence results for our method that are based on the tensor restricted isometry property (TRIP). Our result with the TRIP condition for tensors is similar to low-rank matrix completions under the RIP condition. The TRIP condition uses the t-SVD for low tubal-rank tensors, while RIP  uses the SVD for matrices. We show that a subgaussian measurement map satisfies the TRIP condition with high probability and gives  an almost optimal bound on the number of required measurements. We  compare the numerical performance of the proposed algorithm with those for state-of-the-art approaches on video recovery and color image recovery. \\

\end{abstract}

\begin{IEEEkeywords}
Tensor completion, low rank, tensor singular value decomposition, restricted isometry property, rank minimization, approximation pursuit\end{IEEEkeywords}

%
\IEEEpeerreviewmaketitle

\section{Introduction}

Tensors generalize vectors and matrices \cite{KB2009,H2012}. Tensor completions recently have drawn much attention \cite{LMWY2013,GRY2011,SDLS2013,TKK2010,PC2013,YZ2014}. 
We try to partially reconstruct the original tensor  from a given low rank tensor formed by partial observations and do so sufficiently well. Such problems arise in a variety of applications, such as in signal processing \cite{LMWY2013, ZEAHK2014}, in multi-class learning \cite{OTJ2010}, in data mining \cite{KS2008,SPLCLQ2009}, and in dimension reduction \cite{LLR1995}. 

Typically,  tensor completion is formulated as an optimization problem that involves sums of nuclear norms of the unfolding matrices inside the unknown tensor and uses the notion of tensor rank defined in terms of the higher singular value decomposition (HOSVD) \cite{LMWY2013,GRY2011,SDLS2013,TKK2010}. However,  algorithms that use  sums of matrix nuclear norms generally need to solve several subproblems via a full singular value decomposition at each iteration step. This limits their speed, especially for large tensor sizes due to the high cost of  full SVDs. Moreover sums of nuclear norms do not allow exploiting the tensor structure and such processes lead to  suboptimal procedures, see \cite{YZ2014} e. g.. Furthermore, if  the tensor rank is determined by the CANDECOM/PARAFAC decomposition (CP) for example, a best multirank-k approximation may only exist  under further assumptions. Computing the multirank-k approximation is  highly nontrivial even when it  exits, see \cite{ZSKA2018}.


 For tensor decompositions, our algebraic t-SVD  framework  differs from the classic multilinear algebraic framework \cite{H2012} and the tensor tubal rank (using t-SVD), differs from the CP rank (using CP) and the Tucker rank (using HOSVD). Therefore, bounds and conditions of tensor completion using low Tucker rank or low CP rank are not directly comparable to those in  this paper. 

The t-SVD has recently been used in \cite{LFCLLY2019,ZEAHK2014,ZA2017} for tensor recoveries applied to computer vision. These papers define tensor incoherence conditions and obtain theoretical performance bounds for the corresponding algorithms. Our method differs from these as we employ the TRIP condition to obtain  theoretical performance bounds. We were inspired by \cite{D2016}, \cite{RSS2017} and their main tools, namely $\epsilon$-nets and covering numbers. With $\epsilon$-nets and covering numbers, we show that  subgaussian measurement maps satisfy the TRIP condition with high probability and for a certain almost optimal bound on the number of measurements. Thus we can provide convergence results that hold with high probability when the TRIP conditions are satisfied.


In this paper, the Y tensor and the X tensors all have the same original large tensor sizes in three dimensions. our basic tensor completion model involves a large data tensor $\bm{\mathcal{Y}}_{n_1,n_2,n_3} $, a set $\Omega$ comprised of horizontal, lateral and frontal slice indices $\{(O_h,O_l,O_f)\}$ with  $1 \leq O_h \leq n_1$ and  $1 \leq O_l \leq n_2$ and  $1 \leq O_f \leq n_3$  and $|\Omega| \ll n_1 \cdot n_2 \cdot n_3$ that indicate  the horizontal, lateral and frontal slice indices of a entry subset of $\bm{\mathcal{Y}}$. In this framework we search  for a low rank completion tensor $\bm{\mathcal{X}}_{n_1,n_2,n_3}$ which has the same entries as $\bm{\mathcal{Y}}$ in the positions of $\Omega$ and has arbitrary entries in its complementary positions. The selection is such that  $\bm{\mathcal{X}}$ has the minimal possible tensor rank. This tensor completion problem can  be formalized as follows.  
\begin{equation}\label{1.1}
\text{Find } \ \mathop{\min}\limits_{\bm{\mathcal{X}}\in\mathbb{R}^{n_1\times n_2 \times n_3}} {\rm rank_t}(\bm{\mathcal{X}})
\quad \text{such that} \quad  P_\Omega(\bm{\mathcal{X}})=P_\Omega(\bm{\mathcal{Y}}),
\end{equation}
where $\rm rank_t$ is the tensor tubal rank of Definition \ref{definition 2.6}, see below, $\Omega$ is  the set of all index pairs  in $\bm{\mathcal{Y}}$  that $\bm{\mathcal{X}}$  shares with $\bm{\mathcal{Y}}$. Here $P_\Omega$ denotes the orthogonal projector onto the span of tensors with zeros at the positions not in $\Omega$. The aim of our tensor completion is to create a low rank tensor $\bm{\mathcal{Y}}$ from the partially observed tensor $P_\Omega(\bm{\mathcal{Y}})$ of $\bm{\mathcal{Y}}$. This tensor completion is based on that  you are given a imprecise tensor and want to recreate a same sized tensor that mimicks the nonzero part of the original tensor $\bm{\mathcal{Y}}$ as best as you can. The nonzero part of the original tensor is $P_\Omega(\bm{\mathcal{Y}})$ which is fewer than the $n_1 \cdot n_2 \cdot n_3$ measurements in $\bm{\mathcal{Y}}$. This   benefits  many applications.

Following Section 5 of \cite{WLLFDY2015}, we  extend the basic model (\ref{1.1}) to the following tensor sensing problem
\begin{equation}\label{1.2}
\text{Find }  \ \mathop{{\rm min}}\limits_{\bm{\mathcal{X}} \in \mathbb{R}^{n_1 \times n_2 \times n_3}} {\rm rank_t} (\bm{\mathcal{X}})
\quad s.t. \ \Phi(\bm{\mathcal{X}})=\Phi(\bm{\mathcal{Y}}),
\end{equation}
where $\bm{\mathcal{Y}}$ is a target low rank tensor and $\Phi  =\phi \cdot {\rm vec}$ is a linear operator. Its inverse  $\Phi^{-1} $ is the linear operator with $\Phi\Phi^{-1}(\mathbf b) = \mathbf b$ for any vector $\mathbf b$. Note that $\Phi^{-1}\Phi$ is not an identity operator. This paper proposes a simple and efficient algorithm to solve the more general problem (\ref{1.2}). In every iteration,  rank-one basis tensors are constructed by the truncated t-SVD of  the currently known residual tensor. In our standard version we fully update the weights (or the coefficients) for all rank-one tensors in the current basis set in each iteration. The most time-consuming process in this version is the truncated t-SVD computation. Therefore we recommend readers to adopt the t-SVD algorithm of \cite{LFCLLY2019}  or the rt-SVD method of \cite{ZSKA2018} here. We also adopt an economic weight updating rule from \cite{XX2017} to decrease the time and storage complexity further. Interestingly, both algorithms converge linearly. \\

The notion of the TRIP condition here and our results are new and  not directly implied by the results from matrix completions using the standard matrix RIP conditions and the matrix LRAP. \\

The main contributions of our paper are:
\begin{itemize}
  \item We propose a computationally more efficient greedy algorithm for  tensor completions, which extends the LRAP method for matrix completions to tensor completions. 
  \item This article consists in an analysis of the TRIP related to the tensor formats t-SVD for random measurement maps. We show that subgaussian measurement maps satisfy the TRIP with high probability under a certain almost optimal bound on the number of measurements. 
   \item Using this result of the TRIP condition, We show that the proposed algorithm achieve  linear convergence. 
  \item We illustrate the efficiency of the proposed algorithm for tensor completions via numerical comparison with those for state-of-the-art approaches on video recovery and color image recovery.\\
\end{itemize}

The next section introduces  notations and preliminaries. In Section III, we construct our standard algorithm and  a more economic version. Section IV defines the RIP condition and proves that subgaussian measurement ensembles satisfy the RIP condition with high probability. We prove linear convergence of our algorithms in Section V. Empirical numerical test evaluations and comparisons with other methods are presented in Section VI. They verify the efficiency of the proposed algorithms. 

\section{Notations and Preliminaries}

\subsection{Notations}


Here we denote matrices by boldface capital letters and vectors by boldface lowercase letters. Tensors are represented in Euler bold script letters. For example, a third-order tensor is represented as $\bm{\mathcal{A}}$, and its $(i,j,k)$th entry is represented as $\bm{\mathcal{A}}_{ijk}$ or $a_{ijk}$. The Matlab notation $\bm{\mathcal{A}}(i,:,:),\bm{\mathcal{A}}(:,i,:)$ and $\bm{\mathcal{A}}(:,:,i)$ are used to denote respectively the $i$-th horizontal, lateral and frontal slices. Used frequently, $A^{(i)} $ denotes compactly the frontal slice $\bm{\mathcal{A}}(:,:,i)$ and $\bm{\mathcal{A}}(i,j,:)$ denotes the tube of $i,j$ in the third tensor dimension. The Frobenius inner product of two compatible tensors is 
$$ \langle \bm{\mathcal{A}}, \bm{\mathcal{B}} \rangle=\sum\nolimits_{i_1, \cdots, i_t} {\bm{\mathcal{A}}_{i_1, \cdots, i_t}\bm{\mathcal{B}}_{i_1, \cdots, i_t}}.$$
Then $||\bm{\mathcal{A}}||_F = \sqrt{\langle \bm{\mathcal{A}}, \bm{\mathcal{A}} \rangle}$ is the corresponding Frobenius tensor norm and $d_F(\bm{\mathcal{A}}, \bm{\mathcal{B}}) = ||\bm{\mathcal{A}} - \bm{\mathcal{B}}||_F$ the induced tensor metric. For any $\bm{\mathcal{A}} \in \mathbb{C}^{n_1 \times n_2 \times n_3}$, the complex conjugate of $\bm{\mathcal{A}}$ is denoted as ${\rm conj}(\bm{\mathcal{A}})$, whose entries are the complex conjugates of the respective entry in $\bm{\mathcal{A}}$. The vector ${\rm vec}(\bm{\mathcal{Y}})$ contains the entries  of $\bm{\mathcal{Y}}$ reshaped  by concatenating all tensor entries. The vector $\mathop {\textbf{y}}\limits^{\textbf{.}}={\rm vec}_\Omega(\bm{\mathcal{Y}})=\{(y_{w_1}, \cdots, y_{w_{|\Omega|}})^T \ \forall w_i \in\Omega\}$ denotes the vector generated by concatenating all elements of $\bm{\mathcal{Y}}$ in the index set $\Omega$.

\subsection{Discrete Fourier Transformation (DFT)}


The DFT on $\mathbf v \in \mathbb{R}^n$, denoted as $\hat{\textbf v} $, is obtained by
$$\hat{\mathbf v} = \mathbf F_n \mathbf v \in \mathbb{C}^n,$$ 
where $\mathbf F_n$ is the DFT matrix 
$$\mathbf F_n = \begin{bmatrix}
     1 &   1 &  1& \cdots & 1 \\
     1 &  \omega & \omega^2 & \cdots  & \omega^{n-1} \\
   \vdots  & \vdots & \vdots &   \ddots & \vdots \\
      1 &  \omega^{n-1}  & \omega^{2(n-1)}  & \cdots  & \omega^{(n-1)(n-1)} \\
\end{bmatrix}  \in \mathbb{C}^{n \times n},$$
and $\omega = e^{-{2 \pi i \over n}}$ is a primitive $n$-th root of unity with $i = \sqrt{-1}$ \cite{LFCLLY2019}.
Note that $\mathbf F_n / \sqrt{n}$ is an orthogonal matrix, i.e.,
\begin{equation}\label{7}
\mathbf F^*_n \mathbf F_n = \mathbf F_n \mathbf F^*_n = n \mathbf I_n.
\end{equation}
Hence $\mathbf F^{-1}_n = \mathbf F^*_n /n$. 
 

 

For any tensor $\bm{\mathcal{A}} \in \mathbb{R}^{n_1 \times n_2 \times n_3}$,  ${\bm{\hat\mathcal{A}}} \in \mathbb{C}^{n_1 \times n_2 \times n_3}$  denotes the result of the DFT on $\bm{\mathcal{A}}$ along the 3-rd dimension, i.e., using the DFT on all  $i,j$ tubes in $\bm{\mathcal{A}}$. By using the Matlab command ${\rm fft}$, we obtain
$$\hat{\bm{\mathcal{A}}} = {\rm fft}(\bm{\mathcal{A}},[],3).$$
Analogously $\bm{\mathcal{A}}$ is computed from $\hat{\bm{\mathcal{A}}}$ by performing the inverse FFT, i.e. ,
$$\bm{\mathcal{A}} = {\rm ifft}(\hat{\bm{\mathcal{A}}},[],3).$$
 $\overline{\mathbf A} \in \mathbb{C}^{n_1 n_3 \times n_2 n_3}$ is the block diagonal matrix whose $i$-th diagonal block is the $i$-th frontal slice $\hat{\mathbf A}^{(i)} \text{ of } \hat{\bm{\mathcal{A}}}$, i.e., 
 $$\overline{\mathbf A} = {\rm bdiag}(\hat{\bm{\mathcal{A}}}) = \begin{bmatrix}
     \hat{\mathbf A}^{(1)} &   0 & \cdots & 0 \\
     0 &  \hat{\mathbf A}^{(2)}  &  0 & \cdots \\
   \vdots  &  \ddots &   \ddots & \ddots \\
      0 &   \cdots   & 0  & \hat{\mathbf A}^{(n)} \\
\end{bmatrix}.$$
Here ${\rm bdiag}$ denotes the operator that maps the tensor $\hat{\bm{\mathcal{A}}}$ to the block diagonal matrix $\overline{\mathbf A}$. The block circulant matrix ${\rm bcirc}(\bm{\mathcal{A}}) \in \mathbb{R}^{n_1 n_3 \times n_2 n_3}$ of $\bm{\mathcal{A}}$ is given by
$${\rm bcirc}(\bm{\mathcal{A}})  = \begin{bmatrix}
     \mathbf A^{(1)} &   \mathbf A^{(n_3 )} & \cdots & \mathbf A^{(2)} \\
     \mathbf A^{(2)} & \mathbf A^{(1)}  &  \cdots & \mathbf A^{(3)} \\
   \vdots  &  \vdots &   \ddots & \vdots \\
      \mathbf A^{(n_3 )} &   \mathbf A^{(n_3 -1)}   &  \cdots  & \mathbf A^{(1)} \\
\end{bmatrix}.$$
 The block circulant matrix can be block diagonalized, i.e. ,
 \begin{equation}\label{10}
 (\mathbf F_{n_3} \otimes \mathbf I_{n_1}) \cdot {\rm bcirc}(\bm{\mathcal{A}}) \cdot (\mathbf F^{-1}_{n_3}  \otimes \mathbf I_{n_2}) = \overline{\mathbf A} 
 \end{equation}
 where $\otimes$ denotes the Kronecker product and $(\mathbf F_{n_3} \otimes \mathbf I_{n_1}) / \sqrt{n_3}$ is orthogonal \cite{LFCLLY2019}. 
Based on (\ref{7}), we have 

\begin{equation}\label{12}
||\bm{\mathcal{A}}||_F = {1 \over \sqrt{n_3}} ||\overline{\mathbf A}||_F ,
\end{equation}
 
 \begin{equation}\label{13}
 \langle \bm{\mathcal{A}}, \bm{\mathcal{B}} \rangle = {1 \over n_3}  \langle \overline{\mathbf A}, \overline{\mathbf B} \rangle \ .
 \end{equation}
 
 \subsection{T-product and T-SVD}
 
 For $\bm{\mathcal{A}} \in \mathbb{R}^{n_1 \times n_2 \times n_3}$, the ${\rm unfold}$ operator map of $\bm{\mathcal{A}}$ is the matrix of size $n_1 n_3 \times n_2$
  $${\rm unfold}(\bm{\mathcal{A}}) = \begin{bmatrix} \mathbf A^{(1)} \\  \mathbf A^{(2)}  \\ \vdots \\ \mathbf A^{(n_3)} \end{bmatrix}.$$ 
  Its inverse operator ${\rm fold}$ operates so that  $${\rm fold}({\rm unfold}(\bm{\mathcal{A}})) = \bm{\mathcal{A}}.$$
 \begin{dfn}\label{definition 2.1} \cite{KM2011}
 \textbf{ (T-product)}
 Let $\bm{\mathcal{A}} \in \mathbb{R}^{n_1 \times n_2 \times n_3}$ and $\bm{\mathcal{B}} \in \mathbb{R}^{n_2 \times l \times n_3}$. Then the t-product $\bm{\mathcal{A}}  * \bm{\mathcal{B}} $ is the following tensor of size $n_1 \times l \times n_3$ :
 \begin{equation}\label{14}
 \bm{\mathcal{A}} * \bm{\mathcal{B}} = {\rm fold}({\rm bcirc}(\bm{\mathcal{A}}) \cdot {\rm unfold}(\bm{\mathcal{B}}))\ .
 \end{equation}
\end{dfn}

The t-product is analogous to  matrix multiplication. The only difference between them is that the circular convolution substitutes multiplication between  elements. Therefore the t-product of tensors is  matrix multiplication in the Fourier domain; namely, $\bm{\mathcal{C}}  = \bm{\mathcal{A}}  * \bm{\mathcal{B}} $ is equivalent to $\overline{\mathbf C} = \overline{\mathbf A} \cdot \overline{\mathbf B}$ due to (\ref{10}) \cite{LFCLLY2019}. 
This property provides an efficient way (based on the FFT) to calculate t-product instead of performing (\ref{14}). See Algorithm 1 in \cite{LFCLLY2019}. The t-product has many similar properties as the matrix-matrix product.
  
 \begin{dfn}\label{definition 2.2}\cite{LFCLLY2019}
\textbf{ (Conjugate transpose)} The conjugate transpose of a tensor $\bm{\mathcal{A}} \in \mathbb{C}^{n_1 \times n_2 \times n_3}$ is the tensor $\bm{\mathcal{A}}^* \in \mathbb{C}^{n_1 \times n_2 \times n_3}$ obtained by conjugate transposing each of the frontal slices and then reversing the order of transposed frontal slices 2 through $n_3$.
 \end{dfn}
 
\begin{dfn}\label{definition 2.3}\cite{KM2011}
\textbf{ (Identity tensor)} The identity tensor $\bm{\mathcal{I}} \in \mathbb{R}^{n \times n \times n_3}$ is the tensor with  the $n \times n$ identity matrix $\mathbf I_n$ as its first frontal slice  and all other frontal slices being $\mathbf O_n$. 
\end{dfn}
 
 It is obviously that $\bm{\mathcal{A}}  * \bm{\mathcal{I}} = \bm{\mathcal{A}}$ and $\bm{\mathcal{I}}  * \bm{\mathcal{A}} = \bm{\mathcal{A}}$ for appropriate dimensions. For the tensor $\hat{\bm{\mathcal{I}}} = {\rm fft}(\bm{\mathcal{I}}, [], 3)$ for example, the frontal slice is the identity matrix.
 
\begin{dfn}\label{definition 2.4}\cite{KM2011}
\textbf{ (Orthogonal tensor)} A tensor $\bm{\mathcal{Q}} \in \mathbb{R}^{n \times n \times n_3}$ is orthogonal if it satisfies $\bm{\mathcal{Q}}^* * \bm{\mathcal{Q}} = \bm{\mathcal{Q}} * \bm{\mathcal{Q}}^* = \bm{\mathcal{I}}.$
\end{dfn}
 
 The notion of partial orthogonality can be defined and is analogous to that a tall, thin matrix has orthogonal columns \cite{KM2011}. In this case if $\bm{\mathcal{Q}}$ is $n \times q \times n_3$ and \textbf{partially orthogonal}, then this implies that $\bm{\mathcal{Q}}^* * \bm{\mathcal{Q}}$ is well defined and equivalent to the $q \times q \times n_3$ identity tensor. 
 
 \begin{dfn}\label{definition 2.5}\cite{KM2011}
\textbf{ (F-diagonal tensor)} A tensor is called f-diagonal if each of its frontal slices is a diagonal matrix.
\end{dfn}
 
 \begin{theorem}\label{Theorem 2.2}\cite{LFCLLY2019}
\textbf{ (T-SVD)} Let $\bm{\mathcal{A}} \in \mathbb{R}^{n_1 \times n_2 \times n_3}$. Then it can be factored as  
 \begin{equation}\label{16}
 \bm{\mathcal{A}} = \bm{\mathcal{U}} * \bm{\mathcal{S}} * \bm{\mathcal{V}}^*
  \end{equation}
 where $\bm{\mathcal{U}} \in \mathbb{R}^{n_1 \times n_2 \times n_3}$ and $\bm{\mathcal{V}} \in \mathbb{R}^{n_1 \times n_2 \times n_3}$ are orthogonal, and $\bm{\mathcal{S}} \in \mathbb{R}^{n_1 \times n_2 \times n_3}$ is an f-diagonal tensor.
  \end{theorem}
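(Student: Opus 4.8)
The plan is to reduce the tensor factorization to a family of ordinary matrix SVDs performed in the Fourier domain and then transport the result back through the inverse DFT. First I would pass to the block-diagonal matrix $\overline{\mathbf{A}} = {\rm bdiag}(\hat{\bm{\mathcal{A}}})$, whose diagonal blocks are the frontal slices $\hat{\mathbf{A}}^{(1)}, \ldots, \hat{\mathbf{A}}^{(n_3)}$ of $\hat{\bm{\mathcal{A}}}$. Since each $\hat{\mathbf{A}}^{(i)} \in \mathbb{C}^{n_1 \times n_2}$ admits a matrix SVD $\hat{\mathbf{A}}^{(i)} = \hat{\mathbf{U}}^{(i)} \hat{\mathbf{S}}^{(i)} (\hat{\mathbf{V}}^{(i)})^*$ with $\hat{\mathbf{U}}^{(i)}, \hat{\mathbf{V}}^{(i)}$ unitary and $\hat{\mathbf{S}}^{(i)}$ nonnegative diagonal, I would stack these slices into tensors $\hat{\bm{\mathcal{U}}}, \hat{\bm{\mathcal{S}}}, \hat{\bm{\mathcal{V}}}$ and set $\bm{\mathcal{U}} = {\rm ifft}(\hat{\bm{\mathcal{U}}}, [], 3)$, with $\bm{\mathcal{S}}$ and $\bm{\mathcal{V}}$ defined analogously.

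The verification then rests on the correspondence (stated after Definition \ref{definition 2.1}) that the t-product becomes ordinary block-diagonal matrix multiplication under ${\rm bdiag}$, i.e. $\overline{\mathbf{C}} = \overline{\mathbf{A}}\,\overline{\mathbf{B}}$ whenever $\bm{\mathcal{C}} = \bm{\mathcal{A}} * \bm{\mathcal{B}}$. Applying this twice to $\bm{\mathcal{U}} * \bm{\mathcal{S}} * \bm{\mathcal{V}}^*$ reduces (\ref{16}) to the blockwise identity $\hat{\mathbf{U}}^{(i)} \hat{\mathbf{S}}^{(i)} (\hat{\mathbf{V}}^{(i)})^* = \hat{\mathbf{A}}^{(i)}$, which holds by construction; inverting ${\rm bdiag}$ and the DFT recovers the factorization. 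The orthogonality of $\bm{\mathcal{U}}$ and $\bm{\mathcal{V}}$ follows because each $\hat{\mathbf{U}}^{(i)}$ is unitary, so $\overline{\mathbf{U}}^* \overline{\mathbf{U}}$ equals the identity blockwise, which is precisely the Fourier-domain statement of $\bm{\mathcal{U}}^* * \bm{\mathcal{U}} = \bm{\mathcal{I}}$ of Definition \ref{definition 2.4}; the f-diagonality of $\bm{\mathcal{S}}$ holds because each $\hat{\mathbf{S}}^{(i)}$ is diagonal and the tubewise inverse DFT preserves the zero off-diagonal pattern, matching Definition \ref{definition 2.5}.

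The main obstacle is guaranteeing that the recovered factors $\bm{\mathcal{U}}, \bm{\mathcal{S}}, \bm{\mathcal{V}}$ are \emph{real} rather than merely complex, since the frontal-slice SVDs naturally live in $\mathbb{C}$. The key observation is that because $\bm{\mathcal{A}}$ is real, its Fourier slices obey the conjugate-symmetry relation $\hat{\mathbf{A}}^{(i)} = {\rm conj}(\hat{\mathbf{A}}^{(n_3 - i + 2)})$ for $i = 2, \ldots, n_3$, with $\hat{\mathbf{A}}^{(1)}$ itself real. I would therefore compute independent SVDs only for the indices $i = 1, \ldots, \lceil (n_3+1)/2 \rceil$, and for the remaining indices set $\hat{\mathbf{U}}^{(i)} = {\rm conj}(\hat{\mathbf{U}}^{(n_3 - i + 2)})$, $\hat{\mathbf{S}}^{(i)} = \hat{\mathbf{S}}^{(n_3 - i + 2)}$, and $\hat{\mathbf{V}}^{(i)} = {\rm conj}(\hat{\mathbf{V}}^{(n_3 - i + 2)})$; since $\hat{\mathbf{S}}$ is real this still yields a valid SVD of $\hat{\mathbf{A}}^{(i)}$. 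This enforces the same conjugate symmetry on $\hat{\bm{\mathcal{U}}}, \hat{\bm{\mathcal{S}}}, \hat{\bm{\mathcal{V}}}$, which is exactly the condition guaranteeing that ${\rm ifft}(\cdot, [], 3)$ returns real tensors, and it is the symmetry encoded in the slice-reversal of Definition \ref{definition 2.2}. The only delicate bookkeeping is the parity of $n_3$, where for even $n_3$ the middle slice is itself real and receives its own real SVD, but this case does not alter the argument.
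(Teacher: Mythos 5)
The paper offers no proof of Theorem \ref{Theorem 2.2} at all---it is imported by citation from \cite{LFCLLY2019}---and your argument is precisely the standard proof from that reference (going back to Kilmer--Martin): slice-wise matrix SVDs of $\hat{\bm{\mathcal{A}}}$ in the Fourier domain, conjugate symmetry imposed on the index pairs $(i,\,n_3-i+2)$ so that ${\rm ifft}(\cdot,[],3)$ returns real factors, and the ${\rm bdiag}$/t-product correspondence to verify the factorization, the orthogonality of $\bm{\mathcal{U}},\bm{\mathcal{V}}$, and the f-diagonality of $\bm{\mathcal{S}}$. Your proof is correct; note only that it (rightly) produces $\bm{\mathcal{U}} \in \mathbb{R}^{n_1\times n_1\times n_3}$ and $\bm{\mathcal{V}} \in \mathbb{R}^{n_2\times n_2\times n_3}$, so the sizes $\mathbb{R}^{n_1\times n_2\times n_3}$ attributed to $\bm{\mathcal{U}}$ and $\bm{\mathcal{V}}$ in the paper's statement are a typo rather than something your construction needs to match.
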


 Theorem \ref{Theorem 2.2} implies that any 3 order tensor can be factorized into 3 factors, two of which are orthogonal tensors and the central factor is an f-diagonal tensor, as depicted in Figure \ref{Figure 1}.


 
 \begin{figure}[h]
\centering
 \includegraphics[scale=0.30]{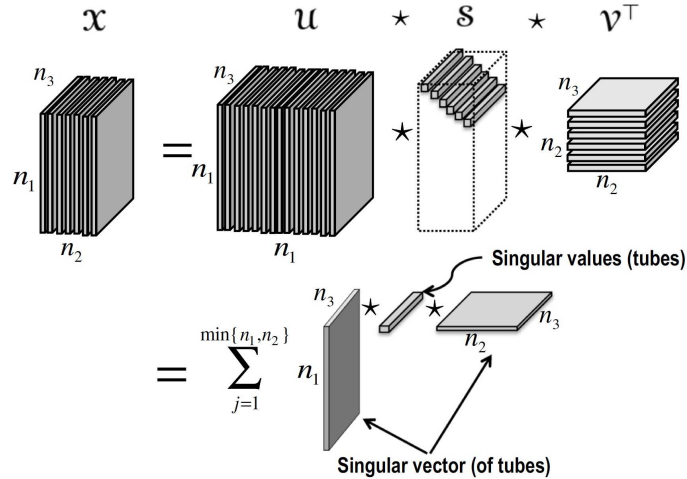}  
  \caption{The construction of the t-SVD is similar to the matrix substitute the equivalent matrix operations. Like the matrix SVD, the t-SVD can also be formalized as the sum of outer tensor products}
\label{Figure 1}
\end{figure}

  \begin{dfn}\label{definition 2.6} \cite{LFCLLY2019}
 	\textbf{ (Tensor tubal rank)} For $\bm{\mathcal{A}} \in \mathbb{R}^{n_1 \times n_2 \times n_3}$ the tensor tubal rank, denoted as ${\rm rank_t} (\bm{\mathcal{A}})$, is the number of nonzero singular tubes of $\bm{\mathcal{S}}$, where $\bm{\mathcal{S}}$ is the central factor of the  t-SVD of $	\bm{\mathcal{A}} = \bm{\mathcal{U}} * \bm{\mathcal{S}} * \bm{\mathcal{V}}^* $.  
 \end{dfn}
 
 \begin{dfn}\label{definition 2.9}\cite{ZSKA2018}
  \textbf{(Truncated t-SVD)} Given a tensor $\bm{\mathcal{A}} \in \mathbb{R}^{n_1 \times n_2 \times n_3}$, the truncated t-SVD of $\bm{\mathcal{A}}$ is $$\bm{\mathcal{A}}_k = \sum^k_{i=1} \bm{\mathcal{U}}(:,i,:) * \bm{\mathcal{S}}(i,i,:) * \bm{\mathcal{V}}(:,i,:)^T,$$ where $k \leq {\rm min}(n_1, n_2)$ is a target truncation term. Here $\bm{\mathcal{A}}_k = \bm{\mathcal{U}}_k * \bm{\mathcal{S}}_k * \bm{\mathcal{V}}_k^T,$
   where $\bm{\mathcal{U}}_k \in \mathbb{R}^{n_1 \times k \times n_3}$ and $\bm{\mathcal{V}}_k \in \mathbb{R}^{n_2 \times k \times n_3}$ are  partially orthogonal tensors and $\bm{\mathcal{S}}_k \in \mathbb{R}^{s \times k \times n_3}$ is an f-diagonal tensor. 
 \end{dfn}
 
 A nice feature of the truncated t-SVD is that it  gives us the best "multirank-k" approximation of  tensors \cite{ZSKA2018}. If a tensor  $\bm{\mathcal{A}}$ has tubal rank $r$, we have $\bm{\mathcal{A}}$ = $\bm{\mathcal{A}}_r$ and we can use $\bm{\mathcal{A}}_r$  as  a reduced version of the t-SVD \cite{ZA2017} . 
 
 \begin{lemma}\label{Lemma 2.2}
 	Let $\bm{\mathcal{M}}_{1}=  \bm{\mathcal{U}} (:,1,:) * {\bm{\mathcal{S}} (1,1,:) \over ||\bm{\mathcal{S}} (1,1,:)||_F}* \bm{\mathcal{V}} (:,1,:)^T$ be the rank-one tensor from the t-SVD of $\bm{\mathcal{A}}$ .\\ Then
 	$\langle {\bm{\mathcal{M}}_1, \bm{\mathcal{A}}} \rangle  \ge {\|  \bm{\mathcal{A}} \|_F \over \sqrt{{\rm min}(m,n)}}$ for all $k \ge 1$.
 \end{lemma}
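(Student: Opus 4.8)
The plan is to transfer the entire computation into the Fourier domain along the third mode, where by (\ref{10}) and Theorem \ref{Theorem 2.2} the t-SVD decouples into $n_3$ ordinary matrix SVDs. Concretely, write $\hat{\mathbf A}^{(i)} = \hat{\mathbf U}^{(i)} \hat{\mathbf S}^{(i)} (\hat{\mathbf V}^{(i)})^*$ for the standard SVD of the $i$-th frontal slice of $\hat{\bm{\mathcal{A}}}$, and let $\sigma_j^{(i)}$ denote its $j$-th singular value, so that $\hat{\mathbf S}^{(i)}_{jj} = \sigma_j^{(i)}$ with $\sigma_1^{(i)} \ge \sigma_2^{(i)} \ge \cdots \ge 0$. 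The singular tube then satisfies $\hat{\bm{\mathcal{S}}}(1,1,i) = \sigma_1^{(i)}$, so by Parseval's identity (the relation $\mathbf F_{n_3}^*\mathbf F_{n_3} = n_3 \mathbf I_{n_3}$ of (\ref{7})) I obtain the key expression $\|\bm{\mathcal{S}}(1,1,:)\|_F^2 = \tfrac{1}{n_3}\sum_{i=1}^{n_3}(\sigma_1^{(i)})^2$, while (\ref{12}) gives $\|\bm{\mathcal{A}}\|_F^2 = \tfrac{1}{n_3}\sum_{i=1}^{n_3}\sum_{j}(\sigma_j^{(i)})^2$.

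First I would compute the inner product $\langle \bm{\mathcal{M}}_1, \bm{\mathcal{A}}\rangle$ via (\ref{13}), which turns it into $\tfrac{1}{n_3}\sum_i \langle \hat{\mathbf M}_1^{(i)}, \hat{\mathbf A}^{(i)}\rangle$ since both $\overline{\mathbf M}_1$ and $\overline{\mathbf A}$ are block diagonal. Because the t-product acts slice-wise in the Fourier domain, the $i$-th block of $\bm{\mathcal{M}}_1$ is the rank-one matrix $\hat{\mathbf M}_1^{(i)} = \tfrac{\sigma_1^{(i)}}{\|\bm{\mathcal{S}}(1,1,:)\|_F}\,\hat{\mathbf u}_1^{(i)}(\hat{\mathbf v}_1^{(i)})^*$, where the scalar normalization is identical for every block. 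Expanding $\hat{\mathbf A}^{(i)}$ in its singular triples and using orthonormality of the left and right singular vectors, so that $\langle \hat{\mathbf u}_1^{(i)}(\hat{\mathbf v}_1^{(i)})^*, \hat{\mathbf u}_j^{(i)}(\hat{\mathbf v}_j^{(i)})^*\rangle = \delta_{1j}$, collapses each block contribution to $(\sigma_1^{(i)})^2/\|\bm{\mathcal{S}}(1,1,:)\|_F$. Summing over $i$ and substituting the Parseval expression above then yields the clean identity $\langle \bm{\mathcal{M}}_1, \bm{\mathcal{A}}\rangle = \|\bm{\mathcal{S}}(1,1,:)\|_F$.

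It then remains only to bound $\|\bm{\mathcal{S}}(1,1,:)\|_F$ from below. Since $\sigma_1^{(i)}$ is the largest singular value of the $i$-th slice, $\sum_j (\sigma_j^{(i)})^2 \le \min(m,n)\,(\sigma_1^{(i)})^2$ for each $i$, where $\min(m,n)$ is the number of singular values per frontal slice; summing over $i$ and comparing the two Parseval expressions gives $\|\bm{\mathcal{A}}\|_F^2 \le \min(m,n)\,\|\bm{\mathcal{S}}(1,1,:)\|_F^2$. Taking square roots and combining with the identity from the previous step delivers $\langle \bm{\mathcal{M}}_1, \bm{\mathcal{A}}\rangle = \|\bm{\mathcal{S}}(1,1,:)\|_F \ge \|\bm{\mathcal{A}}\|_F/\sqrt{\min(m,n)}$, which is the claim.

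I expect the main obstacle to be bookkeeping rather than anything deep: one must track carefully the $1/n_3$ and $1/\sqrt{n_3}$ factors arising from (\ref{12}), (\ref{13}) and Parseval, and verify that the per-slice contributions are genuinely real (which is guaranteed by the conjugate symmetry of the DFT of a real tensor) so that the orthonormality cancellation is legitimate. The only genuinely structural input is that the t-SVD reduces to slice-wise matrix SVDs in the Fourier domain, supplied by (\ref{10}) and Theorem \ref{Theorem 2.2}; the rest is the elementary estimate $\sum_j(\sigma_j^{(i)})^2 \le \min(m,n)\,(\sigma_1^{(i)})^2$.
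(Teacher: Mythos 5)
Your proof is correct and takes essentially the same approach as the paper's: both pass to the Fourier domain, establish the identity $\langle \bm{\mathcal{M}}_1, \bm{\mathcal{A}} \rangle = \|\bm{\mathcal{S}}(1,1,:)\|_F$, and then bound this below using Parseval together with the per-slice estimate $\sum_j (\sigma_j^{(i)})^2 \le \min(n_1,n_2)\,(\sigma_1^{(i)})^2$ and the unitary invariance relating $\|\overline{\mathbf S}\|_F$, $\|\overline{\mathbf A}\|_F$ and $\|\bm{\mathcal{A}}\|_F$. The only cosmetic difference is that you derive the inner-product identity explicitly from slicewise orthonormality of the Fourier-domain singular vectors, whereas the paper obtains it in one step from the unit norm and mutual orthogonality of the rank-one basis tensors.
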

 \proof
 The optimum $\bm{\mathcal{M}}_1$ in our algorithm satisfies
 $$\langle \bm{\mathcal{M}}_1, \bm{\mathcal{A}}\rangle^2 =  ||\bm{\mathcal{S}} (1,1,:)||^2_F \cdot \langle \bm{\mathcal{M}}_1, \bm{\mathcal{M}}_1 \rangle^2 = ||\bm{\mathcal{S}} (1,1,:)||^2_F$$
 $$= || ({\mathbf F_{n_3} \over \sqrt{n_3}} \otimes I) \ {\rm unfold}(\bm{\mathcal{S}} (1,1,:)) ||^2_F$$
 $$= {1 \over n_3} ||\hat{\mathbf S}^{(1)}(1,1)||^2_F+ \cdots + {1 \over n_3} ||\hat{\mathbf S}^{(n_3)}(1,1)||^2_F$$
 $$\geq  {1 \over n_3} {\sum_i [\hat{\mathbf S}^{(1)}(i,i)]^2 \over {\rm rank}(\hat{\mathbf S}^{(1)} ) }+ \cdots +  {1 \over n_3} {\sum_i [\hat{\mathbf S}^{(n_3)}(i,i)]^2 \over {\rm rank}(\hat{\mathbf S}^{(n_3)} ) }$$
 $$\geq  {1 \over n_3} {\sum_i [\hat{\mathbf S}^{(1)}(i,i)]^2 \over {\rm min}(n_1,n_2) }+ \cdots +  {1 \over n_3} {\sum_i [\hat{\mathbf S}^{(n_3)}(i,i)]^2 \over {\rm min}(n_1,n_2) }$$
 $$=  {1 \over n_3} { ||\overline{\mathbf S}||^2_F \over {\rm min}(n_1,n_2) }=  {1 \over n_3} { ||\overline{\mathbf A}||^2_F \over {\rm min}(n_1,n_2) } =  {1 \over n_3} { n_3 ||\bm{\mathcal{A}}||^2_F \over {\rm min}(n_1,n_2) }.$$ This completes the proof. $\Box$
 
 This result will be used to prove Theorem \ref{Theorem 3.1}.
 
\section{Low-Rank Approximation Pursuit  for Tensor Sensing (LRAP4TS)}

Based on Theorem \ref{Theorem 2.2} and Figure \ref{Figure 1}, any tensor $\bm{\mathcal{X}}\in \mathbb{R}^{n_1\times n_2 \times n_3}$  can be written as a linear combination of rank-one tensors, namely
$$ \bm{\mathcal{X}}= \bm{\mathcal{U}} * \bm{\mathcal{S}} * \bm{\mathcal{V}}^T=\mathop{\sum}\limits_{i=I} \bm{\mathcal{U}}(:,i,:) * \bm{\mathcal{S}}(i,i,:) * \bm{\mathcal{V}}(:,i,:)^T$$
$$=\mathop{\sum}\limits_{i=I} ||\bm{\mathcal{S}}(i,i,:)||_F \cdot \bm{\mathcal{U}}(:,i,:) * {\bm{\mathcal{S}}(i,i,:) \over ||\bm{\mathcal{S}}(i,i,:)||_F}* \bm{\mathcal{V}}(:,i,:)^T$$
$$=\mathop{\sum}\limits_{i=I}\theta_i\bm{\mathcal{M}}_i=\bm{\mathcal{M}} (\bm{\theta}) $$
where $\{\bm{\mathcal{M}}_i = {\bm{\mathcal{S}}(i,i,:) \over ||\bm{\mathcal{S}}(i,i,:)||_F} : i\in I \}$ is the set of all $n_1\times n_2\times n_3$ rank-one tensors with unit Frobenius norm and $\theta_i = ||\bm{\mathcal{S}}(i,i,:)||_F $ is the norm of the $i$-th singular value tube.  
Hence the original low rank tensor sensing problem (\ref{1.2}) can be rewritten as
\begin{equation}\label{2.2}
\mathop{\min}\limits_{\bm{\theta}} \| {\bm{\theta}} \|_0
\quad \text{s.t.} \quad \Phi(\bm{\mathcal{M}}(\bm{\theta}))=\Phi(\bm{\mathcal{Y}}),
\end{equation}
where $\| \bm{\theta}\|_0$ denotes the number of nonzero elements of vector $ \bm{\theta}$.
An alternative formula of Problem (\ref{2.2}) is
\begin{equation*}
\mathop{\min}\limits_{\bm{\theta}} \| \Phi(\bm{\mathcal{M}}(\bm{\theta}))-\Phi(\bm{\mathcal{Y}}) \|_F^2 \quad \text{s.t.} \quad \ \| {\bm{\theta}} \|_0 \leqslant r.
\end{equation*}
This problem can be solved by our algorithm, which is an LRAP \cite{XX2017} type algorithm using rank-one tensors as the basis. Below we show the main steps of our LRAP4TS and its economic version ELRAP4TS in Algorithm \ref{alg:Framwork}. Let the orthogonal projector $P_{\Omega}$ be the linear operator $\Phi$, then Algorithm \ref{alg:Framwork} is suitable  for tensor completion and we will be refer to it as LRAP4TC or ELRAP4TC.

Now we give details of the iteration procedure details for both versions of the tensor sensing algorithm. Both greedy algorithms add  $s$ new rank-one tensors to the basis set in each iteration. Both algorithms alternate between three iteration steps: (1) pursuit s rank-one basis tensors;  (2) update the weights of the tensors; and (3) renew the residual tensor.

\begin{algorithm*}[htb]
\caption{Low-Rank Approximation Pursuit  for Tensor Sensing (LRAP4TS) and Economic Version (ELRAP4TS)}
\label{alg:Framwork}
\begin{algorithmic}[1]
\REQUIRE ~~ $\bm{\mathcal{R}}_0 = \Phi^{-1} \Phi (\bm{\mathcal{Y}})$, the tubal rank $r$ of the estimated tensor $\bm{\mathcal{Y}}$ and the number $s$ of candidates searched in each iteration.\\
\textbf{Initialize:} Set $\bm{\mathcal{X}}_0=0$, $\bm{\mathcal{R}}_1=\bm{\mathcal{R}}_0$, $\bm{\theta}^0=0$, $\widehat{\bm{\mathcal{Y}}}_{0}=0$ and $k=1$.\\
\underline{{\bf{While}}} \  \  $k \leqslant \ceil{r/s}$ \ \textbf{do :} \\[0mm] 

\qquad \textbf{Step 1:}  \begin{minipage}[t]{150mm} {(This step  yields  the best multirank-s approximation of $\bm{\mathcal{R}}_k$) \\[-3mm] 

 Search for the $s$ leading principle left and right singular vectors of tubes $\{ (\bm{\mathcal{U}}_k (:,j,:)$, $\bm{\mathcal{S}}_k (j,j,:), \bm{\mathcal{V}}_k (:,j,:)), j=1,\cdots, s \}$ 
 of $\bm{\mathcal{R}}_k$, and set up the $s$ rank-one basis tensors $\{ \bm{\mathcal{M}}_{k,j}=\bm{\mathcal{U}}_k (:,j,:) * {\bm{\mathcal{S}}_k (j,j,:) \over ||\bm{\mathcal{S}}_k (j,j,:)||_F}* \bm{\mathcal{V}}_k (:,j,:)^T, j=1,\cdots, s \}$.\\[0mm]
} \end{minipage} \\[-2mm]
\qquad \textbf{Step 2:} \begin{minipage}[t]{150mm} {Solve the following least squares problem:\\ [-4mm]
\begin{equation}\label{2.6}
1) \mathop{\min}\limits_{\bm{\theta}=(\theta_{1,1},\cdots,\theta_{1,s}, \cdots, \theta_{k,1},\cdots \theta_{k,s})^T \in \mathbb{R}^{sk}}\| \sum_{i=1}^{k} {[\theta_{i,1} \Phi^{-1} \Phi (\bm{\mathcal{M}}_{i,1}) + \cdots + \theta_{i,s}\Phi^{-1} \Phi (\bm{\mathcal{M}}_{i,s})] - \bm{\mathcal{R}}_0} \|^2. \ \ ({\rm LRAP4TS}) 
\end{equation}\\ [-4mm]
Or\\ [-4mm]
\begin{equation}\label{2.9}
2) \mathop{\min}\limits_{\bm{\alpha}=(\alpha_0, \alpha_1,\cdots, \alpha_s )^T \in \mathbb{R}^{s}}\| \alpha_0 \bm{\mathcal{X}}_{k-1} + \alpha_1 \Phi^{-1} \Phi (\bm{\mathcal{M}}_{k,1} +\cdots+\alpha_s \Phi^{-1} \Phi (\bm{\mathcal{M}}_{k,s} )-\bm{\mathcal{R}}_0 \|^2. \ \ ({\rm ELRAP4TS}) 
\end{equation} \\
} \end{minipage} \\[-4mm]
\qquad \textbf{Step 3:} \begin{minipage}[t]{150mm} {1) Set $\bm{\mathcal{X}}_{k}=\sum_{i=1}^{k} [\theta_{i,1}^{k} \Phi^{-1} \Phi(\bm{\mathcal{M}}_{i,1})+ \cdots + \theta_{i,s}^{k} \Phi^{-1} \Phi(\bm{\mathcal{M}}_{i,s})]$ and $\bm{\mathcal{R}}_{k+1}= \Phi^{-1} \Phi(\bm{\mathcal{Y}}) - \bm{\mathcal{X}}_{k}$; $k \leftarrow k+1$\\[2mm] 
		Or\\[2mm]
		2) Set $\bm{\mathcal{X}}_{k}=\alpha_0^k \bm{\mathcal{X}}_{k-1} + \alpha_1^k \Phi^{-1} \Phi(\bm{\mathcal{M}}_{k,1} )+\cdots+\alpha_s^k \Phi^{-1} \Phi(\bm{\mathcal{M}}_{k,s} )$, \\[2mm]
		\hspace*{5mm} $\widehat{\bm{\mathcal{Y}}}_{k}=\alpha_0^k \widehat{\bm{\mathcal{Y}}}_{k-1} + \alpha_1^k \bm{\mathcal{M}}_{k,1} +\cdots+\alpha_s^k \bm{\mathcal{M}}_{k,s}$ and $\bm{\mathcal{R}}_{k+1}= \Phi^{-1} \Phi(\bm{\mathcal{Y}}) - \bm{\mathcal{X}}_{k}$; $k \leftarrow k+1$.\\}\end{minipage}  \\[-4mm]

\underline{\textbf{End While}}\\[0mm] 
\ENSURE ~~ For 1) LRAP4TS:  Construct tensor $\widehat{\bm{\mathcal{Y}}}=\sum_{i=1}^{k}[\theta_{i,1}^{k} (\bm{\mathcal{M}}_{i,1})+ \cdots + \theta_{i,s}^{k} (\bm{\mathcal{M}}_{i,s})]$.\\[0mm] 
\hspace*{10mm} For 2) ELRAP4TS:  $\widehat{\bm{\mathcal{Y}}}=\widehat{\bm{\mathcal{Y}}}_{k}$.\\[-2mm]
\end{algorithmic}
\end{algorithm*}

 (1) In this step, we find a set of $s$  rank-one basis tensors \{$\bm{\mathcal{M}}_{k,1},\cdots, \bm{\mathcal{M}}_{k,s}$\} with unit Frobenius norm, which are related to the currently known residual tensor $\bm{\mathcal{R}}_k$. The tensors $\{\bm{\mathcal{M}}_{k,1},\cdots, \bm{\mathcal{M}}_{k,s}\}$  are constructed based on the $s$ leading principle left and right  singular vectors of the tubes $\{ (\bm{\mathcal{U}}_k (:,j,:), \bm{\mathcal{S}}_k (j,j,:), \bm{\mathcal{V}}_k (:,j,:)), j=1,\cdots, s \}$ from  the t-SVD of $\bm{\mathcal{R}}_k$. By construction, the $s$ rank-one basis tensors $\{ \bm{\mathcal{M}}_{k,j}=\bm{\mathcal{U}}_k (:,i,:) * {\bm{\mathcal{S}}_k (i,i,:) \over ||\bm{\mathcal{S}}_k (i,i,:)||_F}* \bm{\mathcal{V}}_k (:,i,:)^T, j=1,\cdots, s \}$  are orthogonal to each other and have Frobenius norm one. This step locates  the best multirank-s approximation to $\bm{\mathcal{R}}_k$  by computing a truncating t-SVD. The t-SVD can be obtained by the  t-SVD algorithm or the rt-SVD method.

(2) In the standard version of this step, the weights $\bm{\theta}^k$ are estimated for all current basis tensors $\{\bm{\mathcal{M}}_{i,1},\cdots, \bm{\mathcal{M}}_{i,s}, i=1,\cdots, k\}$ by solving the least squares problem (\ref{2.6}).

 Using the orthogonal projector $P_{\Omega}$ as the linear operator $\Phi$ in formula (\ref{1.2}), Problem (\ref{2.6}) can be rewritten as 
\begin{equation*}
\mathop{\min}\limits_{\bm{\theta} \in \mathbb{R}^{sk}} \| \sum_{i=1}^{k} \sum_{j=1}^{s} {\theta_{i,j} P_{\Omega}(\bm{\mathcal{M}}_{i,j}) - P_{\Omega}(\bm{\mathcal{Y}})} \|^2.
\end{equation*}       
By reshaping the tensors $P_{\Omega}(\bm{\mathcal{Y}})$ and $P_{\Omega}(\bm{\mathcal{M}}_{i,j})$ into column vector form $\mathop {\textbf{y}}\limits^{\textbf{.}}$ and ${\mathop {\textbf{m}}\limits^{\textbf{.}}}_{i,j}$,  the above overdetermined system can be reformulated as   
$\mathop{\min}\limits_{\bm{\theta} \in \mathbb{R}^{sk}} \| \overline {\mathbf M}_k \bm{\theta}  - \mathop {\textbf{y}}\limits^{\textbf{.}} \|^2$.
Here $\overline {\mathbf M}_k = [ {\mathop {\textbf{m}}\limits^{\textbf{.}}}_{1,1},\cdots,{\mathop {\textbf{m}}\limits^{\textbf{.}}}_{1,s}, \cdots, {\mathop {\textbf{m}}\limits^{\textbf{.}}}_{k,1},\cdots, {\mathop {\textbf{m}}\limits^{\textbf{.}}}_{k,s} ]$ is the matrix formed by all reshaped basis tensors. The row size of $\overline {\mathbf M}_k$ is equal to the total number of observed entries $p=|\Omega|$.

In case of using  ELRAP4TS,  the orthogonal projection step consists of only tracking the estimated tensor $\bm{\mathcal{X}}_{k-1}$ and the $s$ rank-one basis tensors $\bm{\mathcal{M}}_{k,j}$ for $j=1,\cdots,s$. This step of ELRAP4TS updates the weights for  $s+1$ tensors based on the solution of the least squares problem (\ref{2.9}).

(3) Here we  update the residual tensor $\bm{\mathcal{R}}_{k+1}=\mathcal{P}_\Omega(\bm{\mathcal{Y}})-\bm{\mathcal{X}}_{k}$ as follows:  In the standard LRAP4TS algorithm we use 
\begin{equation*}
\bm{\mathcal{X}}_{k}=\Phi(\bm{\mathcal{M}}(\bm{\theta}^{k}))=\sum_{i=1}^{k}[\theta_{i,1}^{k} \Phi(\bm{\mathcal{M}}_{i,1})+ \cdots + \theta_{i,s}^{k} \Phi(\bm{\mathcal{M}}_{i,s})]=
\end{equation*}
$$=\sum_{i=1}^{k}\sum_{j=1}^{s}\theta_{i,j}^{k} \Phi(\bm{\mathcal{M}}_{i,j}).$$
In the ELRAP4TS version we use  
\begin{equation*}
\bm{\mathcal{X}}_{k}=\alpha_0^k X_{k-1} + \alpha_1^k \Phi(\bm{\mathcal{M}}_{k,1} )+\cdots+\alpha_s^k \Phi(\bm{\mathcal{M}}_{k,s} ).
\end{equation*}

These  three steps are performed iteratively until our stopping criterion is reached. The Flow charts  in Figures \ref{Figure 2} and  \ref{Figure 3} illustrate the process for  $r=4$ and $s=2$.  Both of our methods  find a rank-4 tensor as the approximate solution. For Problem (\ref{1.2}) we need just 2 iterations in LRAP4TS  and in ELRAP4TS. Figure \ref{Figure 2} shows the process of LRAP4TS: the red arrows represent the first iteration step and the blue arrows illustrate the second step. After two iterations LRAP4TS has computed 4 basis tensors and 4 coefficients (or weights), as well as  the rank-4 tensor $\widehat{\bm{\mathcal{Y}}}=\theta_{1,1}^2 \bm{\mathcal{M}}_1 + \theta_{1,2}^2 \bm{\mathcal{M}}_2 + \theta_{1,3}^2 \bm{\mathcal{M}}_3 +\theta_{1,4}^2 \bm{\mathcal{M}}_4 $ (when $r=4$). If we were to continue iterating then in the $h$-th iterations LRAP4TS algorithm would have  to handle 2$h$ basis tensors and $2h$ weights. This data growth induces expanded storage requirements for these tensors  and it increases the computational complexity. Figure \ref{Figure 3} illustrates that ELRAP4TS builds only 3 basis tensors and computes only 3 coefficients in each iteration which is a slight improvement over  LRAP4TS. 

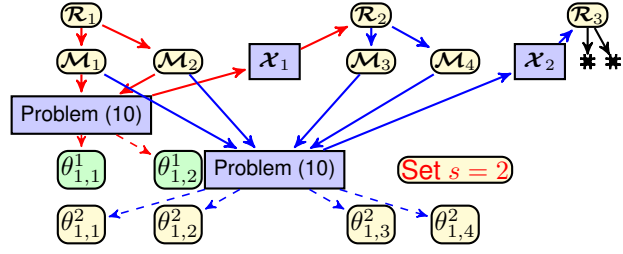
\begin{figure}

\begin{center}
\begin{tikzpicture}[
  font=\sffamily,
  every matrix/.style={ampersand replacement=\&,column sep=0.01cm,row sep=0.2cm},
  source/.style={draw,thick,rounded corners,fill=yellow!20,inner sep=.04cm},
  process/.style={draw,thick, rectangle,fill=blue!20},
  sink/.style={source,fill=green!20},
  datastore/.style={draw,very thick,shape=datastore,inner sep=.04cm},
  dots/.style={gray,scale=2},
  to/.style={->,>=stealth',shorten >=1pt,semithick,font=\sffamily\footnotesize},
  every node/.style={align=center}]

  \matrix{
          \node[source] (R1) {\footnotesize$\bm{\mathcal{R}}_1$};
    \&
    \&
    \&   \node[source] (R2) {\footnotesize$\bm{\mathcal{R}}_2$}; 
    \&
    \&
    \&   \node[source] (R3) {\footnotesize$\bm{\mathcal{R}}_3$}; \\

          \node[source] (M1) {\footnotesize$\bm{\mathcal{M}}_1$};
    \&   \node[source] (M2) {\footnotesize$\bm{\mathcal{M}}_2$};
    \&   \node[process] (X1){\footnotesize$\bm{\mathcal{X}}_1$}; 
    \&   \node[source] (M3) {\footnotesize$\bm{\mathcal{M}}_3$};
    \&   \node[source] (M4) {\footnotesize$\bm{\mathcal{M}}_4$};
    \&   \node[process] (X2){\footnotesize$\bm{\mathcal{X}}_2$};
    \&   \node[source] (Space1) {};
    \&   \node[source] (Space2) {};  \\

          \node[process] (LS1){\footnotesize Problem (\ref{2.6})};  \\

          \node[sink] (theta111) {$\theta_{1,1}^1$};
     \&  \node[sink] (theta121) {$\theta_{1,2}^1$};
     \&  \node[process] (LS2){\footnotesize Problem (\ref{2.6})};
     \&
     \&   \node[source] (s){\red{Set $s=2$}};\\

          \node[source] (theta112) {$\theta_{1,1}^2$};
    \&   \node[source] (theta122) {$\theta_{1,2}^2$};
    \&   
    \&   \node[source] (theta132) {$\theta_{1,3}^2$};
    \&   \node[source] (theta142) {$\theta_{1,4}^2$};

     \\ };

 \draw[to,red,thick] (R1) --(M1);        
 \draw[to,red,thick] (R1) --(M2);           
 \draw[to,red,thick] (M1) --(LS1);        
 \draw[to,red,thick] (M2) --(LS1); 
 \draw[to,red,dashed] (LS1) --(theta111);        
 \draw[to,red,dashed] (LS1) --(theta121);      
 \draw[to,red,thick] (LS1) --(X1);  
 \draw[to,red,thick] (X1) --(R2);

 \draw[to,blue,thick] (R2) --(M3);        
 \draw[to,blue,thick] (R2) --(M4);       
 \draw[to,blue,thick] (M1) --(LS2);        
 \draw[to,blue,thick] (M2) --(LS2);     
 \draw[to,blue,thick] (M3) --(LS2);        
 \draw[to,blue,thick] (M4) --(LS2); 
 \draw[to,blue,dashed] (LS2) --(theta112);        
 \draw[to,blue,dashed] (LS2) --(theta122);     
 \draw[to,blue,dashed] (LS2) --(theta132);        
 \draw[to,blue,dashed] (LS2) --(theta142);      
 \draw[to,blue,thick] (LS2) --(X2);  
 \draw[to,blue,thick] (X2) --(R3);

 \draw[to,thick] (R3) --(Space1);        
 \draw[to,thick] (R3) --(Space2);

\end{tikzpicture}

\end{center}
\caption{Standard LRAP4TS Algorithm (With $r=4$ and $s=2$)}\label{Figure 2}
\end{figure}


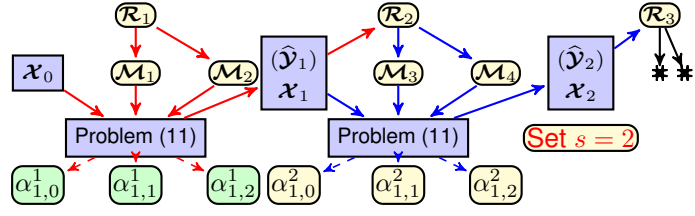
\begin{figure}

\begin{center}
\begin{tikzpicture}[
  font=\sffamily,
  every matrix/.style={ampersand replacement=\&,column sep=0.001cm,row sep=0.1cm},
  source/.style={draw,thick,rounded corners,fill=yellow!20,inner sep=.04cm},
  process/.style={draw,thick, rectangle,fill=blue!20},
  sink/.style={source,fill=green!20},
  datastore/.style={draw,very thick,shape=datastore,inner sep=.04cm},
  dots/.style={gray,scale=2},
  to/.style={->,>=stealth',shorten >=1pt,semithick,font=\sffamily\footnotesize},
  every node/.style={align=center}]

  \matrix{

    \&   \node[source] (R1) {\footnotesize$\bm{\mathcal{R}}_1$};
    \&
    \&
    \&   \node[source] (R2) {\footnotesize$\bm{\mathcal{R}}_2$}; 
    \&
    \&
    \&   \node[source] (R3) {\footnotesize$\bm{\mathcal{R}}_3$}; \\

          \node[process] (X0){\footnotesize$\bm{\mathcal{X}}_0$}; 
    \&   \node[source] (M1) {\footnotesize$\bm{\mathcal{M}}_1$};
    \&   \node[source] (M2) {\footnotesize$\bm{\mathcal{M}}_2$};
    \&   \node[process] (X1){\footnotesize$(\widehat{\bm{\mathcal{Y}}}_{1})$\\ \footnotesize$\bm{\mathcal{X}}_1$}; 
    \&   \node[source] (M3) {\footnotesize$\bm{\mathcal{M}}_3$};
    \&   \node[source] (M4) {\footnotesize$\bm{\mathcal{M}}_4$};
    \&   \node[process] (X2){\footnotesize$(\widehat{\bm{\mathcal{Y}}}_{2})$\\ \footnotesize $\bm{\mathcal{X}}_2$};
    \&   \node[source] (Space1) {};
    \&   \node[source] (Space2) {};  \\

    \&   \node[process] (LS1){\footnotesize Problem (\ref{2.9})}; 
     \&
      \&
       \& \node[process] (LS2){\footnotesize Problem (\ref{2.9})};
        \&
        \& \node[source] (s){\red{Set $s=2$}}; \\

           \node[sink] (theta101) {$\alpha_{1,0}^1$};   
     \&    \node[sink] (theta111) {$\alpha_{1,1}^1$};
     \&  \node[sink] (theta121) {$\alpha_{1,2}^1$};
        \&    \node[source] (theta102) {$\alpha_{1,0}^2$};
     \&    \node[source] (theta112) {$\alpha_{1,1}^2$};
    \&   \node[source] (theta122) {$\alpha_{1,2}^2$};
       \\ };

 \draw[to,red,thick] (R1) --(M1);        
 \draw[to,red,thick] (R1) --(M2);    
 \draw[to,red,thick] (X0) --(LS1);        
 \draw[to,red,thick] (M1) --(LS1);        
 \draw[to,red,thick] (M2) --(LS1); 
 \draw[to,red,dashed] (LS1) --(theta101);
 \draw[to,red,dashed] (LS1) --(theta111);        
 \draw[to,red,dashed] (LS1) --(theta121);      
 \draw[to,red,thick] (LS1) --(X1);  
 \draw[to,red,thick] (X1) --(R2);

 \draw[to,blue,thick] (R2) --(M3);        
 \draw[to,blue,thick] (R2) --(M4);             
 \draw[to,blue,thick] (X1) --(LS2);     
 \draw[to,blue,thick] (M3) --(LS2);        
 \draw[to,blue,thick] (M4) --(LS2); 
 \draw[to,blue,dashed] (LS2) --(theta102);
 \draw[to,blue,dashed] (LS2) --(theta112);        
 \draw[to,blue,dashed] (LS2) --(theta122);     
   
 \draw[to,blue,thick] (LS2) --(X2);  
 \draw[to,blue,thick] (X2) --(R3);

 \draw[to,thick] (R3) --(Space1);        
 \draw[to,thick] (R3) --(Space2);

\end{tikzpicture}

\end{center}

\caption{Economic ELRAP4TS Algorithm (With $r=4$ and $s=2$)}\label{Figure 3}
\end{figure}






\section{TRIP}
\begin{dfn}\label{definition 1}\cite{ZWHWW2019}
\textbf{(TRIP).} Let the linear operator $\Phi : \mathbb{R}^{n_1 \times n_2 \times n_3} \rightarrow \mathbb{R}^m$ be a linear map on the linear space of tensors of size $n_1 \times n_2 \times n_3$ with $n_1 \leq n_2$. Then for the t-SVD decomposition and every integer $r$ with $1 \leq r \leq n_1$, the TRIP constant $\delta_r$ of $\Phi$ is the smallest quantity such that
$$(1-\delta_r) ||\bm{\mathcal{X}}||^2_F \leq ||\Phi(\bm{\mathcal{X}})||^2_2 \leq (1+\delta_r) ||\bm{\mathcal{X}}||^2_F$$  
 for all tensors $\bm{\mathcal{X}} \in \mathbb{R}^{n_1 \times n_2 \times n_3}$ of rank at most $r$.
\end{dfn}

\begin{theorem}\label{Theorem 2}
For $\delta, \epsilon \in (0,1)$, a random draw of an L-subgaussian measurement ensemble $\Phi : \mathbb{R}^{n_1 \times n_2 \times n_3} \rightarrow \mathbb{R}^m$ satisfies $\delta_r \leq \delta$ with probability at least $1-\epsilon$, provided that 
$$m \geq C \alpha^2 \delta^{-2} \max \{(r \cdot r \cdot n_3+ n_1 \cdot r \cdot n_3  + n_2 \cdot r \cdot n_3) \log(2), \log( \eta^{-1}) \},$$
where $r$ is tensor tubal rank. 
The constants $C > 0$ only depend on the subgaussian parameter $L$.
\end{theorem}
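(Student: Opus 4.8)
The plan is to follow the now-standard route for establishing RIP-type bounds for random ensembles, pioneered in the matrix case and adapted to structured tensor formats in \cite{D2016} and \cite{RSS2017}: combine a pointwise concentration inequality for subgaussian measurements with a covering-number (i.e. $\epsilon$-net) estimate for the manifold of low tubal-rank tensors, and then glue the two together by a union bound followed by a net-to-manifold continuity argument. Throughout I would work with the set
$$\Sigma_r = \{ \bm{\mathcal{X}} \in \mathbb{R}^{n_1 \times n_2 \times n_3} : {\rm rank_t}(\bm{\mathcal{X}}) \leq r, \ \|\bm{\mathcal{X}}\|_F = 1\},$$
since the TRIP inequality of Definition \ref{definition 1} is homogeneous of degree two, so it suffices to control $\big| \|\Phi(\bm{\mathcal{X}})\|_2^2 - 1 \big|$ uniformly over $\Sigma_r$.

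\textbf{Step 1 (pointwise concentration).} For a \emph{fixed} $\bm{\mathcal{X}} \in \Sigma_r$, recall $\Phi = \phi \cdot {\rm vec}$, so each coordinate of $\Phi(\bm{\mathcal{X}})$ is a subgaussian variable with variance $\|\bm{\mathcal{X}}\|_F^2 = 1$, and $\|\Phi(\bm{\mathcal{X}})\|_2^2$ is a sum of $m$ independent squared subgaussians with mean $1$. A Bernstein / Hanson--Wright bound for subgaussian quadratic forms then yields
$$\mathbb{P}\Big( \big| \|\Phi(\bm{\mathcal{X}})\|_2^2 - 1 \big| \geq t \Big) \leq 2\exp\!\big(-c\, m\, t^2 / \alpha^2\big), \qquad t \in (0,1),$$
with $c$ depending only on $L$. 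This is the tensor analogue of the scalar concentration used for matrix RIP, and it is the source of the factor $\alpha^2 \delta^{-2}$ in the final count.

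\textbf{Step 2 (covering number --- the main obstacle).} The crux is to bound the $\epsilon$-covering number $N(\Sigma_r, \epsilon)$ in the Frobenius metric. Here I would use the t-SVD factorization $\bm{\mathcal{X}} = \bm{\mathcal{U}} * \bm{\mathcal{S}} * \bm{\mathcal{V}}^*$ of Theorem \ref{Theorem 2.2}, passing to the Fourier domain where, by (\ref{10}), $\bm{\mathcal{X}}$ corresponds to the block-diagonal matrix $\overline{\mathbf X}$ whose $n_3$ blocks each have rank at most $r$, and where (\ref{12}) shows Frobenius distance is preserved up to the factor $1/\sqrt{n_3}$. One then nets each factor separately: the partially orthogonal $\bm{\mathcal{U}}_r$ and $\bm{\mathcal{V}}_r$ contribute $n_1 r n_3$ and $n_2 r n_3$ degrees of freedom respectively, while the f-diagonal core contributes $r \cdot r \cdot n_3$; multiplying the individual nets and controlling the resulting perturbation of the product $\bm{\mathcal{U}} * \bm{\mathcal{S}} * \bm{\mathcal{V}}^*$ gives
$$N(\Sigma_r, \epsilon) \leq \Big( \tfrac{C_0}{\epsilon} \Big)^{D}, \qquad D = (r \cdot r + n_1 \cdot r + n_2 \cdot r)\, n_3.$$
The difficulty is twofold: verifying that the product map from factor-nets to tensor-nets is Lipschitz (so an $\epsilon$-net on the factors induces an $O(\epsilon)$-net on $\Sigma_r$), and checking that the block-diagonal Fourier structure does not inflate the dimension count beyond the stated $D$. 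This is exactly the t-SVD counterpart of the Candès--Plan covering bound for low-rank matrices and of the HOSVD/tensor-train covering estimates in \cite{RSS2017}.

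\textbf{Step 3 (union bound and extension).} Fix a minimal $\epsilon$-net $\mathcal{N}$ of $\Sigma_r$ with $\epsilon$ a small absolute constant. Applying Step 1 with $t = \delta/2$ and a union bound over $\mathcal{N}$, the event that $\big| \|\Phi(\bm{\mathcal{X}})\|_2^2 - 1 \big| \leq \delta/2$ holds for all $\bm{\mathcal{X}} \in \mathcal{N}$ fails with probability at most $2 (C_0/\epsilon)^{D} \exp(-c\, m\, \delta^2 / \alpha^2)$. Requiring this to be at most the target failure probability $\eta$ forces
$$m \geq C \alpha^2 \delta^{-2} \max\{ D \log(2), \ \log(\eta^{-1}) \},$$
which is precisely the claimed sampling bound. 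I would finish by upgrading control on the net to uniform control on all of $\Sigma_r$ via the usual bootstrap: for arbitrary $\bm{\mathcal{X}} \in \Sigma_r$, take the nearest net point, expand $\|\Phi(\bm{\mathcal{X}})\|_2^2$, and absorb the residual using the self-bounding supremum of $\|\Phi(\cdot)\|_2^2 - 1$ over $\Sigma_r$; choosing $\epsilon$ appropriately relative to $\delta$ then converts the $\delta/2$ net bound into $\delta_r \leq \delta$. The whole argument is dominated by Step 2.
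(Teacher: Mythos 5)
Your plan is sound, and its centerpiece --- Step 2, the covering bound with exponent $D=(r\cdot r+n_1\cdot r+n_2\cdot r)\,n_3$ obtained by netting the three t-SVD factors separately --- is exactly the paper's Lemma \ref{Lemma 2}, proved there by the same factor-wise strategy (the device that makes your ``Lipschitz product map'' step work is the norm $\|\cdot\|_{1,F}$, the maximal Frobenius norm of tube fibers in the Fourier domain, which yields $\|(\bm{\mathcal{U}}-\tilde{\bm{\mathcal{U}}})*\bm{\mathcal{S}}\|_F\leq\|\bm{\mathcal{S}}\|_F\,\|\bm{\mathcal{U}}-\tilde{\bm{\mathcal{U}}}\|_{1,F}$, following \cite{CP2011} and \cite{RSS2017}). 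Where you genuinely diverge is in the probabilistic half: the paper never performs your Steps 1 and 3. It simply reads Lemma \ref{Lemma 2} as saying that $\mathscr{S}_r$ has covering dimension $K=D$ with parameter $c=9$, and then invokes Corollary \ref{Corollary 5.4} of \cite{D2016} as a black box, which converts a covering-dimension bound directly into the TRIP statement for subgaussian maps. Your route --- pointwise Bernstein-type concentration, a union bound over the net, then a net-to-set bootstrap --- is the classical Cand\`es--Plan style argument; it is more elementary and self-contained, at the price of re-deriving by hand the machinery that \cite{D2016} has already packaged. One caveat in your Step 3: the residual $\bm{\mathcal{X}}-\bm{\mathcal{X}}_0$ does not lie in $\Sigma_r$ (it has tubal rank up to $2r$ and small norm), so the ``self-bounding supremum over $\Sigma_r$'' cannot absorb it verbatim; you need the standard rank-splitting device --- take the t-SVD of the difference and write it as a sum of two mutually orthogonal tensors of tubal rank at most $r$ (which works blockwise in the Fourier domain) --- before the supremum applies. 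That fix is routine, and with it both routes deliver the same sampling bound $m\gtrsim\alpha^2\delta^{-2}\max\{D,\log(\eta^{-1})\}$.
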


To prepare for the proof of  Theorem \ref{Theorem 2}, we state several useful Lemmas. 
The proof of Theorem \ref{Theorem 2} also uses $\epsilon$-nets and covering numbers, see \cite{V2012} for background on these topics.

\begin{dfn}\label{Definition 2}\cite{V2012}
\textbf{(Nets, covering numbers)}
A set $\mathscr{N}^{\mathscr{X}}_{\epsilon} \subset \mathscr{X}$ with $\mathscr{X}$ a subset of a normed space is called an $\epsilon$-net of $\mathscr{X}$ with respect to the norm $||\cdot ||$ if for each $\mathbf v \in  \mathscr{X}$ there exists $\mathbf v_0 \in \mathscr{N}^{\mathscr{X}}_{\epsilon}$ with $||\mathbf v_0 - \mathbf v || \leq \epsilon$. The minimal cardinality of an $\epsilon$-net of $\mathscr{X}$ with respect to the norm $||\cdot ||$ is denoted by $\mathscr{N} (\mathscr{X}, ||\cdot ||, \epsilon) $ and called the covering number of  $\mathscr{X}$ (at scale $\epsilon$).
\end{dfn}

Equivalently, $\mathscr{N} (\mathscr{X}, ||\cdot ||, \epsilon) $ is the minimal number of balls with radii $\epsilon$ and with centers in $\mathscr{X}$ needed to cover $\mathscr{X}$. The following is well-known \cite{V2012, RSS2017} and will be used frequently in what follows.

\begin{lemma}\label{Lemma 1}\cite{V2012}
	\textbf{(Covering numbers of the sphere)}
Let $\mathscr{X}$ be a subset of a vector space of real dimension $k$ with norm $||\cdot ||$, and let $0 < \epsilon < 1$. Then there exists an $\epsilon$-net 
$\mathscr{N}^{\mathscr{X}}_{\epsilon} \subset \mathscr{X}$ with $$|\mathscr{N}^{\mathscr{X}}_{\epsilon}| \leq {Vol(\mathscr{X} + {\epsilon \over 2} \mathscr{B}) \over Vol({\epsilon \over 2} \mathscr{B})},$$ where ${\epsilon \over 2} \mathscr{B}$ is an $\epsilon / 2$ ball with respect to the norm $||\cdot ||$ and $$\mathscr{X} + {\epsilon \over 2} \mathscr{B} = \{\mathbf x+\mathbf y : \mathbf x \in  \mathscr{X}, \mathbf y \in {\epsilon \over 2} \mathscr{B}\}.$$ Specifically if $\mathscr{X}$ is a subset of  the $||\cdot ||$-unit ball then $\mathscr{X} + {\epsilon \over 2} \mathscr{B}$ is contained in the $(1+{\epsilon \over 2})$-ball and thus
$$|\mathscr{N}^{\mathscr{X}}_{\epsilon}| \leq {(1 + \epsilon / 2)^k \over (\epsilon / 2)^k} = (1 + {2 \over \epsilon})^k < (3 / \epsilon)^k.$$
\end{lemma}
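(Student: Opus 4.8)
The plan is to use the classical volumetric packing argument, which produces the net as a maximal separated set and then controls its cardinality by a volume comparison. First I would fix a maximal $\epsilon$-separated subset $\mathscr{N}^{\mathscr{X}}_{\epsilon} = \{\mathbf v_1, \dots, \mathbf v_N\} \subset \mathscr{X}$, i.e. a collection of points with $||\mathbf v_i - \mathbf v_j|| > \epsilon$ for all $i \neq j$ that cannot be enlarged while keeping this separation. Existence of such a maximal set follows from a greedy selection (Zorn's lemma in the general case; the volume bound below will in fact show that $N$ is automatically finite whenever $Vol(\mathscr{X} + \frac{\epsilon}{2}\mathscr{B})$ is finite).

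Next I would verify the two properties that make this set the desired net. For the covering property I argue by maximality: for any $\mathbf v \in \mathscr{X}$ the enlarged set $\mathscr{N}^{\mathscr{X}}_{\epsilon} \cup \{\mathbf v\}$ fails to be $\epsilon$-separated, so some $\mathbf v_i$ satisfies $||\mathbf v - \mathbf v_i|| \le \epsilon$; hence $\mathscr{N}^{\mathscr{X}}_{\epsilon}$ is indeed an $\epsilon$-net. For the packing property I show the balls $\mathbf v_i + \frac{\epsilon}{2}\mathscr{B}$ are pairwise disjoint: a point common to two of them would, by the triangle inequality, force $||\mathbf v_i - \mathbf v_j|| \le \epsilon$, contradicting the strict separation. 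Since each center lies in $\mathscr{X}$, every such ball is contained in $\mathscr{X} + \frac{\epsilon}{2}\mathscr{B}$.

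The core estimate is then the volume comparison. Identifying the $k$-dimensional real space linearly with $\mathbb{R}^k$ so that Lebesgue volume is translation invariant and homogeneous of degree $k$ under dilation, the disjoint inclusion $\bigcup_i (\mathbf v_i + \frac{\epsilon}{2}\mathscr{B}) \subseteq \mathscr{X} + \frac{\epsilon}{2}\mathscr{B}$ yields $N \cdot Vol(\frac{\epsilon}{2}\mathscr{B}) \le Vol(\mathscr{X} + \frac{\epsilon}{2}\mathscr{B})$, which is exactly the asserted bound on $|\mathscr{N}^{\mathscr{X}}_{\epsilon}|$. For the specialization, when $\mathscr{X}$ sits inside the unit ball the Minkowski sum $\mathscr{X} + \frac{\epsilon}{2}\mathscr{B}$ lies inside the $(1 + \frac{\epsilon}{2})$-ball, so the volume ratio collapses to $(1+\epsilon/2)^k / (\epsilon/2)^k = (1 + 2/\epsilon)^k$, and the final strict inequality $(1 + 2/\epsilon)^k < (3/\epsilon)^k$ follows from $\epsilon < 1$.

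The main obstacle is not any single computation but the bookkeeping around the volume functional on a general $k$-dimensional normed space: I must ensure the measure in use is translation invariant and scales by $t^k$ under dilation by $t$, which is precisely what legitimizes $Vol(\mathbf v_i + \frac{\epsilon}{2}\mathscr{B}) = Vol(\frac{\epsilon}{2}\mathscr{B})$ and $Vol((1+\epsilon/2)\mathscr{B}) = (1+\epsilon/2)^k\, Vol(\mathscr{B})$. Transporting Lebesgue measure through a fixed linear isomorphism onto $\mathbb{R}^k$ settles this cleanly, and the unknown normalizing constant $Vol(\mathscr{B})$ cancels in every ratio, so no further quantitative input is needed.
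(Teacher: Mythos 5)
Your proof is correct. Note that the paper itself does not prove this lemma at all --- it is quoted as a known result from \cite{V2012} --- so there is no in-paper argument to compare against; your maximal-separated-set plus volume-comparison argument is precisely the standard proof from that cited reference. All the steps check out: maximality of the separated set gives the covering property, strict $\epsilon$-separation makes the $\epsilon/2$-balls disjoint by the triangle inequality, and transporting Lebesgue measure through a fixed linear isomorphism with $\mathbb{R}^k$ legitimizes the translation invariance and $t^k$-dilation scaling you need, with the final strict inequality following from $2+\epsilon<3$ when $\epsilon<1$.
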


The covering number Lemmas are  critical for the proof of Theorem \ref{Theorem 2} where we need to calculate the covering number for the set of rank $r$ tensors with unit Frobenius norm.

\begin{lemma}\label{Lemma 2}
(\textbf{Covering numbers related to the t-SVD}). The covering numbers of 
$$\mathscr{S}_r = \{\bm{\mathcal{X}} \in \mathbb{R}^{n_1 \times n_2 \times n_3} : {\rm rank_t} (\bm{\mathcal{X}}) \leq r, ||\bm{\mathcal{X}}||_F = 1  \}$$
with respect to the Frobenius norm are bounded by
\begin{equation}\label{47}
\mathscr{N} (\mathscr{S}_r, ||\cdot ||_F, \epsilon) \leq (9 / \epsilon)^{r \cdot r \cdot n_3+ n_1 \cdot r \cdot n_3  + n_2 \cdot r \cdot n_3}. 
\end{equation}
\end{lemma}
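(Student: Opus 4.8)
The plan is to mimic the classical covering-number estimate for low-rank matrices, carrying it through the t-SVD by passing to the Fourier (block-diagonal) domain of (\ref{10}). First I would use the reduced t-SVD of Theorem \ref{Theorem 2.2} to parametrize every $\bm{\mathcal{X}} \in \mathscr{S}_r$ as $\bm{\mathcal{X}} = \bm{\mathcal{U}} * \bm{\mathcal{S}} * \bm{\mathcal{V}}^*$ with $\bm{\mathcal{U}} \in \mathbb{R}^{n_1 \times r \times n_3}$ and $\bm{\mathcal{V}} \in \mathbb{R}^{n_2 \times r \times n_3}$ partially orthogonal and $\bm{\mathcal{S}} \in \mathbb{R}^{r \times r \times n_3}$ f-diagonal. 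The three exponents $n_1 \cdot r \cdot n_3$, $n_2 \cdot r \cdot n_3$ and $r \cdot r \cdot n_3$ in (\ref{47}) are exactly the real dimensions of the ambient spaces of these three factors, which is the structural reason the bound takes this shape; covering each factor separately and multiplying cardinalities will produce the claimed product of exponents.

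Next I would record the two norm facts that make the factors interact correctly. Writing $||\bm{\mathcal{A}}||_{\rm op} := ||\overline{\mathbf A}||_2$ for the tensor spectral norm (the largest spectral norm among the frontal blocks $\hat{\mathbf A}^{(i)}$), the block-diagonalization (\ref{10}) together with (\ref{12}) yields the submultiplicativity estimates
$$||\bm{\mathcal{A}} * \bm{\mathcal{B}}||_F \leq ||\bm{\mathcal{A}}||_{\rm op} \, ||\bm{\mathcal{B}}||_F, \qquad ||\bm{\mathcal{A}} * \bm{\mathcal{B}}||_F \leq ||\bm{\mathcal{A}}||_F \, ||\bm{\mathcal{B}}||_{\rm op},$$
since in the Fourier domain these reduce, blockwise and after rescaling by $1/\sqrt{n_3}$, to the matrix inequalities $||MN||_F \leq ||M||_2 \, ||N||_F$ and $||MN||_F \leq ||M||_F \, ||N||_2$. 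The same identities show that partial orthogonality forces $||\bm{\mathcal{U}}||_{\rm op} = ||\bm{\mathcal{V}}||_{\rm op} = 1$ and give the isometry $||\bm{\mathcal{U}} * \bm{\mathcal{S}} * \bm{\mathcal{V}}^*||_F = ||\bm{\mathcal{S}}||_F$, so the constraint $||\bm{\mathcal{X}}||_F = 1$ is equivalent to $||\bm{\mathcal{S}}||_F = 1$.

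Then I would build an $\bar\epsilon$-net for each factor with $\bar\epsilon = \epsilon/3$ by invoking Lemma \ref{Lemma 1}. The partially orthogonal $\bm{\mathcal{U}}$ lie on the $||\cdot||_{\rm op}$-unit sphere inside the $n_1 \cdot r \cdot n_3$-dimensional real space $\mathbb{R}^{n_1 \times r \times n_3}$, so Lemma \ref{Lemma 1} furnishes a net $\mathscr{N}_U$ with $|\mathscr{N}_U| < (3/\bar\epsilon)^{n_1 \cdot r \cdot n_3}$; likewise $|\mathscr{N}_V| < (3/\bar\epsilon)^{n_2 \cdot r \cdot n_3}$, and covering the Frobenius unit sphere of the full space $\mathbb{R}^{r \times r \times n_3}$ gives $|\mathscr{N}_S| < (3/\bar\epsilon)^{r \cdot r \cdot n_3}$. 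Given $\bm{\mathcal{X}} = \bm{\mathcal{U}} * \bm{\mathcal{S}} * \bm{\mathcal{V}}^*$ I choose nearby $\bm{\mathcal{U}}', \bm{\mathcal{S}}', \bm{\mathcal{V}}'$ from these nets and bound, via the telescoping split
$$\bm{\mathcal{X}} - \bm{\mathcal{U}}'*\bm{\mathcal{S}}'*(\bm{\mathcal{V}}')^* = (\bm{\mathcal{U}}-\bm{\mathcal{U}}')*\bm{\mathcal{S}}*\bm{\mathcal{V}}^* + \bm{\mathcal{U}}'*(\bm{\mathcal{S}}-\bm{\mathcal{S}}')*\bm{\mathcal{V}}^* + \bm{\mathcal{U}}'*\bm{\mathcal{S}}'*(\bm{\mathcal{V}}-\bm{\mathcal{V}}')^*,$$
each of the three terms by $\bar\epsilon$ using the submultiplicativity estimates together with $||\bm{\mathcal{S}}||_F = ||\bm{\mathcal{S}}'||_F = 1$ and $||\bm{\mathcal{V}}||_{\rm op} = ||\bm{\mathcal{U}}'||_{\rm op} = 1$, so that $||\bm{\mathcal{X}} - \bm{\mathcal{U}}'*\bm{\mathcal{S}}'*(\bm{\mathcal{V}}')^*||_F \leq 3\bar\epsilon = \epsilon$. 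Since $\bm{\mathcal{U}}', \bm{\mathcal{V}}'$ are partially orthogonal and $||\bm{\mathcal{S}}'||_F = 1$, the product $\bm{\mathcal{U}}'*\bm{\mathcal{S}}'*(\bm{\mathcal{V}}')^*$ again lies in $\mathscr{S}_r$, so the collection of all such products is a genuine $\epsilon$-net of $\mathscr{S}_r$ of cardinality at most $|\mathscr{N}_U| \, |\mathscr{N}_S| \, |\mathscr{N}_V| < (3/\bar\epsilon)^{r \cdot r \cdot n_3 + n_1 \cdot r \cdot n_3 + n_2 \cdot r \cdot n_3} = (9/\epsilon)^{r \cdot r \cdot n_3 + n_1 \cdot r \cdot n_3 + n_2 \cdot r \cdot n_3}$, which is exactly (\ref{47}).

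The routine part is the net counting; the part that needs care is verifying that $||\cdot||_{\rm op}$ is a bona fide norm on the real space $\mathbb{R}^{n_1 \times r \times n_3}$ (so that Lemma \ref{Lemma 1} applies with the stated dimension) and that the submultiplicativity and isometry identities transfer cleanly from the complex Fourier blocks $\hat{\mathbf A}^{(i)}$ back to the real tensors without introducing spurious factors of $2$ from complexification or of $n_3$ from the DFT scaling. Keeping the parametrization in the real t-SVD factors of Theorem \ref{Theorem 2.2}, rather than in the complex blocks, is precisely what keeps the exponent equal to the real dimensions $n_1 \cdot r \cdot n_3 + n_2 \cdot r \cdot n_3 + r \cdot r \cdot n_3$ and is the main point to get right.
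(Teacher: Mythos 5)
Your proposal is correct and follows essentially the same route as the paper's proof: parametrize $\mathscr{S}_r$ by its t-SVD factors, cover each factor at scale $\epsilon/3$ via Lemma \ref{Lemma 1} (yielding exactly the exponents $r\cdot r\cdot n_3$, $n_1\cdot r\cdot n_3$, $n_2\cdot r\cdot n_3$), and combine the three nets through the identical telescoping split of $\bm{\mathcal{X}}-\tilde{\bm{\mathcal{X}}}$. The only divergence is the auxiliary norm on the orthogonal factors: you use the tensor spectral norm $||\overline{\mathbf A}||_2$, whereas the paper uses the max tube-fiber norm $||\cdot ||_{1,F}$ and covers only the f-diagonal unit-norm cores; both choices bound each telescoping term by $\epsilon/3$, and your spectral-norm variant has the minor advantage that its submultiplicativity step does not rely on the f-diagonal structure of $\bm{\mathcal{S}}$.
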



\begin{proof}
 The proof follows a same strategy as the Lemma 3.1 of \cite{CP2011} and Lemma 3 of \cite{RSS2017}. 
 The (truncating) t-SVD decomposition $\bm{\mathcal{X}} = \bm{\mathcal{U}} * \bm{\mathcal{S}} * \bm{\mathcal{V}}^T$ of any $\bm{\mathcal{X}} \in \mathscr{S}_r$ obeys $|| \bm{\mathcal{S}}||_F = 1$ ($ \bm{\mathcal{S}}$ is a f-diagonal tensor with singular tubes on the diagonal, and $ \bm{\mathcal{U}}$ and  $\bm{\mathcal{V}}$ are partially orthogonal tensors of left- and right-singular vectors of tubes). We constructs an $\epsilon$-net for $ \mathscr{S}_r$ by covering the sets of tensors $ \bm{\mathcal{U}}$, $ \bm{\mathcal{V}}$ with partially orthogonal lateral slices and the set of unit Frobenius tensors $ \bm{\mathcal{S}}$. 

Let $\mathscr{D}$ be the set of f-diagonal tensors $\bm{\mathcal{X}} \in \mathbb{R}^{r \times r  \times n_3}$ with 
unit Frobenius norm, which is included in $\mathscr{F} = \{ \bm{\mathcal{X}} \in \mathbb{R}^{r \times r  \times n_3} : ||\bm{\mathcal{X}} ||_F = 1\}$. Hence Lemma \ref{Lemma 1} gives an $\epsilon / 3$-net $\mathscr{N}^{\mathscr{F}}_{\epsilon / (d + 1)}$ in respect of the Frobenius norm of cardinality $$|\mathscr{N}^{\mathscr{F}}_{\epsilon / (d + 1)} | \leq (9 /\epsilon)^{r \cdot r \cdot n_3}.$$ For covering $\mathscr{O}_{n_1,r,n_3} = \{\bm{\mathcal{U}} \in \mathbb{R}^{n_1 \times r \times n_3}: \bm{\mathcal{U}}^T * \bm{\mathcal{U}} = \bm{\mathcal{I}} \}$ and $\mathscr{O}_{n_2,r,n_3} = \{\bm{\mathcal{V}} \in \mathbb{R}^{n_2 \times r \times n_3}: \bm{\mathcal{V}}^T * \bm{\mathcal{V}} = \bm{\mathcal{I}} \}$, it is crucial to use the norm $|| \cdot ||_{1,F}$, which is defined as 
$$ || \bm{\mathcal{Y}} ||_{1,F} =  \max_{i,j} || \hat{\bm{\mathcal{Y}}} (:,j,k) ||_F, $$
where $\hat{\bm{\mathcal{Y}}} (:,j,k)$ denotes the $j,k$-th tube fiber in $\hat{\bm{\mathcal{Y}}}$ from $\bm{\mathcal{Y}}$ using the DFT. Obviously, we have that $ || \bm{\mathcal{Y}} ||_{1,F}=|| \overline{\mathbf Y} ||_{1,F}.$

Because the elements of $\mathscr{O}_{n_1,r,n_3}$ have normed lateral slices, it holds $\mathscr{O}_{n_1,r,n_3} \subset \mathscr{Q}_{n_1,r,n_3} = \{ \bm{\mathcal{Y}} \in \mathbb{R}^{n_1 \times r \times n_3}:  || \bm{\mathcal{Y}} ||_{1,F} \leq 1 \}$. Lemma \ref{Lemma 1} provides 
$$\mathscr{N} (\mathscr{O}_{n_1,r,n_3}, ||\cdot ||_{1,F}, \epsilon / 3) \leq 9 / \epsilon)^{n_1 \cdot r \cdot n_3 },$$
 i.e. there exists an $ \epsilon / 3$-net $\mathscr{N}^{\mathscr{O}_{n_1,r,n_3}}_{\epsilon / 3}$ of this cardinality.

 Then the set
 $$\mathscr{N}^{\mathscr{S}_r}_{\epsilon} := \{ \tilde{\bm{\mathcal{U}}} * \tilde{\bm{\mathcal{S}}} * \tilde{\bm{\mathcal{V}}}^T :  \tilde{\bm{\mathcal{S}}} \in \mathscr{N}^{\mathscr{D}}_{\epsilon / 3}, \ \tilde{\bm{\mathcal{U}}} \in \mathscr{N}^{\mathscr{O}_{n_1,r,n_3}}_{\epsilon / 3}, $$ 
 $$ \ and \ \ \tilde{\bm{\mathcal{V}}} \in \mathscr{N}^{\mathscr{O}_{n_2,r,n_3}}_{\epsilon / 3} \} $$
obeys
$$|\mathscr{N}^{\mathscr{S}_r}_{\epsilon} | \leq \mathscr{N} (\mathscr{D}, ||\cdot ||_{F}, \epsilon / 3) \cdot \mathscr{N} (\mathscr{O}_{n_1,r,n_3}, ||\cdot ||_{1,F}, \epsilon / 3) \cdot$$
 $$\cdot \mathscr{N} (\mathscr{O}_{n_2,r,n_3}, ||\cdot ||_{1,F}, \epsilon / 3) \leq  (9 / \epsilon)^{r \cdot r \cdot n_3+ n_1 \cdot r \cdot n_3  + n_2 \cdot r \cdot n_3}.$$
 It remains to show that $\mathscr{N}^{\mathscr{S}_r}_{\epsilon}$ is an $\epsilon$-net for $\mathscr{S}_r$, i.e. that for all $\bm{\mathcal{X}} \in \mathscr{S}_r$ there exists $\tilde{\bm{\mathcal{X}}} \in\mathscr{N}^{\mathscr{S}_r}_{\epsilon}$ with $||\bm{\mathcal{X}} - \tilde{\bm{\mathcal{X}}}||_F \leq \epsilon$. 
 
 At last, we fix $\bm{\mathcal{X}} \in \mathscr{S}_r$ and decompose $\bm{\mathcal{X}}$ as 
 $\bm{\mathcal{X}} = \bm{\mathcal{U}} * \bm{\mathcal{S}} * \bm{\mathcal{V}}^T$. Then there exists $\tilde{\bm{\mathcal{X}}} = \tilde{\bm{\mathcal{U}}} * \tilde{\bm{\mathcal{S}}} * \tilde{\bm{\mathcal{V}}}^T \in \mathscr{N}^{\mathscr{S}_r}_{\epsilon}$ with  $\tilde{\bm{\mathcal{U}}} \in \mathscr{N}^{\mathscr{O}_{n_1,r,n_3}}_{\epsilon / 3}$, $\tilde{\bm{\mathcal{V}}} \in \mathscr{N}^{\mathscr{O}_{n_2,r,n_3}}_{\epsilon / 3}$ and $\tilde{\bm{\mathcal{S}}} \in \mathscr{N}^{\mathscr{D}}_{\epsilon / 3}$ obeying $||\bm{\mathcal{U}} - \tilde{\bm{\mathcal{U}}}||_{1,F} \leq \epsilon / 3$, $||\bm{\mathcal{V}} - \tilde{\bm{\mathcal{V}}}||_{1,F} \leq \epsilon / 3$ and $||\bm{\mathcal{S}} - \tilde{\bm{\mathcal{S}}}||_F \leq \epsilon / 3$. This gives 
\begin{equation}\label{48}
\begin{array}{lll}
||\bm{\mathcal{X}} - \tilde{\bm{\mathcal{X}}}||_F = || \bm{\mathcal{U}} * \bm{\mathcal{S}} * \bm{\mathcal{V}}^T- \tilde{\bm{\mathcal{U}}} * \tilde{\bm{\mathcal{S}}} * \tilde{\bm{\mathcal{V}}}^T||_F \\
 \ \ \ \ \leq || (\bm{\mathcal{U}} - \tilde{\bm{\mathcal{U}}})  * \bm{\mathcal{S}} * \bm{\mathcal{V}}^T||_F + || \tilde{\bm{\mathcal{U}}} *( \bm{\mathcal{S}} - \tilde{\bm{\mathcal{S}}}) * \bm{\mathcal{V}}^T||_F \\
 \ \ \ \ \ \ \ + ||\tilde{\bm{\mathcal{U}}} * \tilde{\bm{\mathcal{S}}} * (\bm{\mathcal{V}} - \tilde{\bm{\mathcal{V}}})^T||_F 
\end{array}
\end{equation}
 
For the first term, since $\bm{\mathcal{V}}$ is an partially orthogonal tensor and have unitary invariance \cite{KM2011}, 
$ || (\bm{\mathcal{U}} - \tilde{\bm{\mathcal{U}}})  * \bm{\mathcal{S}} * \bm{\mathcal{V}}^T||_F = || (\bm{\mathcal{U}} - \tilde{\bm{\mathcal{U}}})  * \bm{\mathcal{S}}||_F$, and 
\begin{equation*}
\begin{array}{lll}
|| (\bm{\mathcal{U}} - \tilde{\bm{\mathcal{U}}})  * \bm{\mathcal{S}}||^2_F =|| \bm{\mathcal{C}}  * \bm{\mathcal{S}}||^2_F \\
\qquad = {1 \over n_3}|| \overline{\mathbf C}  * \overline{\mathbf S}||^2_F  \leq  {1 \over n_3}||\overline{\mathbf S}||^2_F \cdot || \overline{\mathbf C} ||_{1,F}   \\
\qquad  = ||\bm{\mathcal{S}}||^2_F \cdot || \overline{\mathbf C} ||_{1,F} = ||\bm{\mathcal{S}}||^2_F \cdot || \bm{\mathcal{C}} ||_{1,F}  \\
\qquad  =  ||\bm{\mathcal{S}}||^2_F \cdot || (\bm{\mathcal{U}} - \tilde{\bm{\mathcal{U}}})  ||^2_{1,F} \\
\qquad  \leq  (\epsilon / 3)^2,
\end{array}
\end{equation*}
where $\bm{\mathcal{C}} = (\bm{\mathcal{U}} - \tilde{\bm{\mathcal{U}}})$. Hence, $|| (\bm{\mathcal{U}} - \tilde{\bm{\mathcal{U}}})  * \bm{\mathcal{S}} * \bm{\mathcal{V}}^T||_F \leq  \epsilon / 3$. The same argument gives $||\tilde{\bm{\mathcal{U}}} * \tilde{\bm{\mathcal{S}}} * (\bm{\mathcal{V}} - \tilde{\bm{\mathcal{V}}})^T||_F \leq  \epsilon / 3$. To bound the middle term, observe that $|| \tilde{\bm{\mathcal{U}}} *( \bm{\mathcal{S}} - \tilde{\bm{\mathcal{S}}}) * \bm{\mathcal{V}}^T||_F = ||  \bm{\mathcal{S}} - \tilde{\bm{\mathcal{S}}}||_F \leq  \epsilon / 3$. Therefore, we have the desired result. 
 \end{proof}


The proof of Theorem \ref{Theorem 2} also requires the following result from Corollary 5.4 of \cite{D2016}.

\begin{cor}\label{Corollary 5.4} \cite{D2016}
Let $\mathscr{S}_{(1)}, \cdots , \mathscr{S}_{(k)}$ be subsets of a Hilbert space $H$ and let $\mathscr{S} = \cup^k_{i=1} \mathscr{S}_{(i)}$.
Set  $$\mathscr{S}_{(i),nv} = \{\mathbf x / ||\mathbf x ||_2 : \mathbf x \in \mathscr{S}_{(i)} \}.$$
suppose that $\mathscr{S}_{(i),nv}$ has covering dimension $K_i$ with parameter $c_i$ and base covering $\mathscr{N}_{0,(i)}$ with respect to $d_H$. Set $K = \max_i  K_i, c = \max_i c_i$ and $\mathscr{N}_0 = \max_i  \mathscr{N}_{0,(i)}$. Let $\Phi : \Omega \times  H \rightarrow  \mathbb{R}^m$ be a subgaussian map on $\mathscr{S}_{nv}$. Then, for any $0 < \delta, \eta < 1$, we have $\mathbb{P}(\delta_{\mathscr{S},\Phi} \leq \eta)$, provided that 
$$m \geq C \alpha^2 \delta^{-2} max \{\log k + \log \mathscr{N}_0 + K \log(c), \log( \eta^{-1)} \}.$$ 
\end{cor}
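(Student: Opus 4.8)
The plan is to identify $\delta_{\mathscr{S},\Phi}$ with the supremum of a centered quadratic (chaos) process indexed by the normalized union set $\mathscr{S}_{nv} = \cup_{i=1}^k \mathscr{S}_{(i),nv}$, to control that supremum by generic chaining, and then to convert the chaining functional into the stated arithmetic quantity using the covering-dimension hypothesis. The reduction to $\mathscr{S}_{nv}$ is legitimate because the two restricted isometry inequalities defining $\delta_{\mathscr{S},\Phi}$ are homogeneous of degree two, so it suffices to bound $\sup_{\mathbf x \in \mathscr{S}_{nv}} | \, \|\Phi \mathbf x\|_2^2 - \|\mathbf x\|_2^2 \, | = \sup_{\mathbf x \in \mathscr{S}_{nv}} | \, \|\Phi \mathbf x\|_2^2 - 1 \, |$.

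First I would record the probabilistic behaviour of the process. For a fixed unit vector $\mathbf x$, subgaussianity of $\Phi$ makes $\|\Phi\mathbf x\|_2^2$ a normalized sum of $m$ independent subexponential summands, so a Bernstein-type tail gives $\mathbb{P}( | \, \|\Phi\mathbf x\|_2^2 - 1 \, | > t) \le 2\exp(-c\,m\,\min(t^2,t)/\alpha^2)$; more importantly, the increments obey the mixed subgaussian/subexponential bound $\| \, \|\Phi\mathbf x\|_2^2 - \|\Phi\mathbf y\|_2^2 \, \|_{\psi_1} \lesssim \alpha\, d_H(\mathbf x,\mathbf y)$. This is exactly the increment hypothesis under which the generic-chaining deviation inequality for chaos processes --- the main estimate of \cite{D2016}, of which the present corollary is the stated specialization --- applies and yields, with probability at least $1-\eta$,
$$\sup_{\mathbf x \in \mathscr{S}_{nv}} | \, \|\Phi\mathbf x\|_2^2 - 1 \, | \;\lesssim\; \frac{\alpha}{\sqrt m}\,\gamma_2 + \frac{\alpha^2}{m}\,\gamma_2^2 + \frac{\alpha}{\sqrt m}\sqrt{\log(\eta^{-1})} + \frac{\alpha^2}{m}\log(\eta^{-1}),$$
where $\gamma_2 = \gamma_2(\mathscr{S}_{nv}, d_H)$ is Talagrand's functional in the Hilbert metric.

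The core estimate is then to bound $\gamma_2$ by the Dudley entropy integral $\gamma_2(\mathscr{S}_{nv}, d_H) \lesssim \int_0^{\infty} \sqrt{\log \mathscr{N}(\mathscr{S}_{nv}, d_H, \epsilon)}\, d\epsilon$ and to feed in the covering-dimension data. Since $\mathscr{S}_{nv}$ is a union of $k$ pieces, subadditivity of covering numbers and the hypothesis $\mathscr{N}(\mathscr{S}_{(i),nv}, d_H, \epsilon) \le \mathscr{N}_{0,(i)}(c_i/\epsilon)^{K_i}$ give $\mathscr{N}(\mathscr{S}_{nv}, d_H, \epsilon) \le k\,\mathscr{N}_0\,(c/\epsilon)^{K}$ for $0 < \epsilon \le 1$; moreover $\mathscr{S}_{nv}$ has $d_H$-diameter at most $2$, so the integral effectively truncates at a constant scale. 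Evaluating $\int_0^{1}\sqrt{\log k + \log \mathscr{N}_0 + K\log(c/\epsilon)}\, d\epsilon$ and using $\int_0^1\sqrt{\log(1/\epsilon)}\,d\epsilon = O(1)$ yields $\gamma_2 \lesssim \sqrt{\log k + \log \mathscr{N}_0 + K\log c}$, the $\sqrt K$ arising from the $\log(1/\epsilon)$ part being absorbed into the constant (recall $c$ is a covering parameter $\ge 1$).

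Finally I would balance the four terms of the deviation inequality against $\delta$: requiring the two $\gamma_2$-terms to be $\le \delta/4$ forces, since $\delta<1$, the dominant constraint $m \gtrsim \alpha^2\delta^{-2}\gamma_2^2 \asymp \alpha^2\delta^{-2}(\log k + \log \mathscr{N}_0 + K\log c)$, while the two $\eta$-terms force $m \gtrsim \alpha^2\delta^{-2}\log(\eta^{-1})$; taking the maximum reproduces exactly the displayed lower bound on $m$ with $C$ depending only on the subgaussian parameter $L$ (through $\alpha$ and the chaining constant). The main obstacle I anticipate is not any single inequality but the correct verification of the chaos increment condition together with the simultaneous balancing of the linear $\alpha/\sqrt m$ and quadratic $\alpha^2/m$ chaining terms; this is precisely the bookkeeping that the general theorem of \cite{D2016} packages, so in the finished write-up the work reduces to checking that $\mathscr{S}_{nv}$ meets that theorem's hypotheses and that its covering number satisfies the displayed polynomial bound.
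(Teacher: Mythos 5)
The paper contains no proof of this corollary---it is imported verbatim from Corollary 5.4 of \cite{D2016}---so the only baseline is the source's argument, and your sketch faithfully reconstructs it: homogeneous reduction to the normalized union $\mathscr{S}_{nv}$, the generic-chaining deviation inequality for the quadratic process, a Dudley-type bound on $\gamma_2$ fed with $\mathscr{N}(\mathscr{S}_{nv},d_H,\epsilon)\le k\,\mathscr{N}_0(c/\epsilon)^K$ via the union bound over the $k$ pieces, and the final balancing of the linear $\alpha\gamma_2/\sqrt{m}$ and quadratic $\alpha^2\gamma_2^2/m$ terms against $\delta$ and $\log(\eta^{-1})$---essentially the same approach, and sound as an outline. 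Two minor points for a finished write-up: the increments of the chaos process satisfy a mixed subgaussian/subexponential tail rather than the single $\psi_1$ bound $\lesssim \alpha\,d_H(\mathbf x,\mathbf y)$ you state (which is exactly why one invokes the packaged chaining theorem of \cite{D2016} instead of plain subgaussian chaining for the process itself), and the conclusion as transcribed in this paper, ``$\mathbb{P}(\delta_{\mathscr{S},\Phi} \leq \eta)$'', is a garbled rendering of $\mathbb{P}(\delta_{\mathscr{S},\Phi} \geq \delta) \leq \eta$, which your sketch correctly interprets as the intended high-probability statement.
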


Proof of Theorem \ref{Theorem 2}. The proof follows the same strategy as that of Example 5.8 of \cite{D2016}. We consider the Frobenius inner product 
$ \langle \bm{\mathcal{X}}, \bm{\mathcal{Y}} \rangle$, the corresponding norm $||\bm{\mathcal{X}}||_F = \langle \bm{\mathcal{X}}, \bm{\mathcal{Y}} \rangle^2$  and the induced metric $d_F(\bm{\mathcal{X}}, \bm{\mathcal{Y}}) = ||\bm{\mathcal{X}} - \bm{\mathcal{Y}}||_F$.

As said earlier,
$$\mathscr{S}_r = \{\bm{\mathcal{X}} \in \mathbb{R}^{n_1 \times n_2 \times n_3} : {\rm rank_t} (\bm{\mathcal{X}}) \leq r, ||\bm{\mathcal{X}}||_F = 1  \},$$ then $\delta_r = \delta_{\mathscr{S}_r, \Phi} $.
It is shown in Lemma {\ref{Lemma 2}} that for any $0 < \epsilon \leq 1$, the covering number is 
$$\mathscr{N} (\mathscr{S}_r, ||\cdot ||_F, \epsilon) \leq (3(2+1) / \epsilon)^{r \cdot r \cdot n_3+ n_1 \cdot r \cdot n_3  + n_2 \cdot r \cdot n_3}.$$
In other words, $\mathscr{S}_r $ has covering dimension $K = r \cdot r \cdot n_3+ n_1 \cdot r \cdot n_3  + n_2 \cdot r \cdot n_3$ with parameter $c = 9$. Corollary \ref{Corollary 5.4} means that for any subgaussian map $\Phi$ and $0< \delta, \eta < 1$, we have $\mathbb{P}(\delta_r \geq \delta) \leq \eta$, provided that 
$$m \geq C \alpha^2 \delta^{-2} \max \{(r \cdot r \cdot n_3+ n_1 \cdot r \cdot n_3  + n_2 \cdot r \cdot n_3) \log(2), \log( \eta^{-1}) \}.$$ 
This completes the proof. $\Box$



\section{Convergence Analysis}

We will proof that Algorithm \ref{alg:Framwork} converges linearly in this section. This  is shown in Theorem \ref{Theorem 3.1}. 





\begin{theorem}\label{Theorem 3.1}
	Algorithm \ref{alg:Framwork} in the standard LRAP4TS and economic LRAP4TS versions both have the linear convergence rate of
	\begin{equation*}
	\left\|\bm{\mathcal{R}}_k\right\| \leqslant  \left( {\sqrt{1-\frac{1}{\min(m,n)}}} \right)^{k-1} \cdot \left\|\Phi^{-1}(\mathbf{b})\right\| \text{ for all } k\geqslant 1,
	\end{equation*}
	where $\mathbf{b}=\Phi(\bm{\mathcal{Y}})=\phi \cdot vec(\bm{\mathcal{Y}})$.  This holds for all tensors $\bm{\mathcal{Y}}$ of rank at most $r$.
\end{theorem}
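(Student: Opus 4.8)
The plan is to reduce the asserted global linear rate to a single per-iteration contraction of the residual and then telescope. Concretely, I would first establish that for every $k \ge 1$,
$$\|\bm{\mathcal{R}}_{k+1}\|_F^2 \le \left(1 - \frac{1}{\min(m,n)}\right)\|\bm{\mathcal{R}}_k\|_F^2,$$
and then iterate this bound starting from $k=1$, using that $\bm{\mathcal{R}}_1 = \bm{\mathcal{R}}_0 = \Phi^{-1}\Phi(\bm{\mathcal{Y}}) = \Phi^{-1}(\mathbf{b})$, and finally take square roots to recover the stated estimate. Thus the entire argument hinges on the one-step contraction, and the telescoping is purely mechanical.

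To get the one-step contraction I would exploit the optimality of the least-squares update in Step 2. In both versions $\bm{\mathcal{X}}_k$ minimizes $\|\bm{\mathcal{Z}} - \bm{\mathcal{R}}_0\|_F^2$ over a subspace $V_k$ that contains both the previous iterate $\bm{\mathcal{X}}_{k-1}$ and the leading search direction $\Phi^{-1}\Phi(\bm{\mathcal{M}}_{k,1})$: for LRAP4TS because Problem (\ref{2.6}) ranges over all accumulated basis tensors, and for ELRAP4TS because Problem (\ref{2.9}) already contains $\bm{\mathcal{X}}_{k-1}$ and $\Phi^{-1}\Phi(\bm{\mathcal{M}}_{k,1})$ (take $\alpha_0=1$). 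Since $\|\bm{\mathcal{R}}_{k+1}\|_F = \min_{\bm{\mathcal{Z}}\in V_k}\|\bm{\mathcal{R}}_0 - \bm{\mathcal{Z}}\|_F$, I would bound it above by evaluating the single candidate $\bm{\mathcal{Z}} = \bm{\mathcal{X}}_{k-1} + t\,\Phi^{-1}\Phi(\bm{\mathcal{M}}_{k,1})$, so that $\bm{\mathcal{R}}_0 - \bm{\mathcal{Z}} = \bm{\mathcal{R}}_k - t\,\Phi^{-1}\Phi(\bm{\mathcal{M}}_{k,1})$; minimizing the resulting one-dimensional quadratic in $t$ gives
$$\|\bm{\mathcal{R}}_{k+1}\|_F^2 \le \|\bm{\mathcal{R}}_k\|_F^2 - \frac{\langle \bm{\mathcal{R}}_k, \Phi^{-1}\Phi(\bm{\mathcal{M}}_{k,1})\rangle^2}{\|\Phi^{-1}\Phi(\bm{\mathcal{M}}_{k,1})\|_F^2}.$$

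The next step is to strip off the $\Phi^{-1}\Phi$ distortion and bring in Lemma \ref{Lemma 2.2}. In the completion setting $\Phi = P_\Omega$, the operator $\Phi^{-1}\Phi = P_\Omega$ is an orthogonal projector, every residual lies in its range (so $P_\Omega\bm{\mathcal{R}}_k = \bm{\mathcal{R}}_k$), and it is norm non-expansive; these facts yield $\langle \bm{\mathcal{R}}_k, P_\Omega \bm{\mathcal{M}}_{k,1}\rangle = \langle \bm{\mathcal{R}}_k, \bm{\mathcal{M}}_{k,1}\rangle$ and $\|P_\Omega\bm{\mathcal{M}}_{k,1}\|_F \le \|\bm{\mathcal{M}}_{k,1}\|_F = 1$, so the subtracted term is at least $\langle \bm{\mathcal{R}}_k, \bm{\mathcal{M}}_{k,1}\rangle^2$. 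Because $\bm{\mathcal{M}}_{k,1}$ is exactly the leading unit-Frobenius rank-one term of the t-SVD of $\bm{\mathcal{R}}_k$, Lemma \ref{Lemma 2.2} applied with $\bm{\mathcal{A}} = \bm{\mathcal{R}}_k$ gives $\langle \bm{\mathcal{R}}_k, \bm{\mathcal{M}}_{k,1}\rangle^2 \ge \|\bm{\mathcal{R}}_k\|_F^2 / \min(m,n)$, which closes the per-iteration contraction.

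I expect the $\Phi^{-1}\Phi$ bookkeeping to be the main obstacle. The clean identities above rely on $\Phi^{-1}\Phi$ being an orthogonal projection that fixes each residual and does not increase norms; this is automatic for the completion operator $P_\Omega$, but for a general sensing map $\Phi$ the quantity $\|\Phi^{-1}\Phi(\bm{\mathcal{M}}_{k,1})\|_F$ need not be bounded by one and the inner product may be distorted, so that direction must be handled separately (for instance by controlling $\Phi^{-1}\Phi$ through the TRIP constant of Definition \ref{definition 1}). The remaining ingredients—the one-dimensional least-squares optimization and the final telescoping from $\bm{\mathcal{R}}_1$—are routine and require no further structure.
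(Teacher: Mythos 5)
Your proposal follows essentially the same route as the paper's own proof: restrict the least-squares step to the single direction $\Phi^{-1}\Phi(\bm{\mathcal{M}}_{k,1})$ (with $\alpha_0=1$ in the economic version), solve the one-dimensional quadratic in closed form, use $\|\Phi^{-1}\Phi(\bm{\mathcal{M}}_{k,1})\|_F \leq 1$ and $\langle \bm{\mathcal{R}}_k, \Phi^{-1}\Phi(\bm{\mathcal{M}}_{k,1})\rangle = \langle \bm{\mathcal{R}}_k, \bm{\mathcal{M}}_{k,1}\rangle$ together with Lemma~\ref{Lemma 2.2} to get the one-step contraction, and telescope from $\bm{\mathcal{R}}_1 = \Phi^{-1}(\mathbf{b})$. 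If anything, you are more explicit than the paper about where the projector identities come from and that they require separate justification for a general sensing map $\Phi$ (the paper asserts the norm bound from ``$\Phi\Phi^{-1}$ is an identity operator'' without elaboration), but the underlying argument is the same.
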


\proof First we give that both LRAP4TS and ELRAP4TS satisfy the following inequality:
\begin{equation*}
\|\bm{\mathcal{R}}_{k+1}\|^2 \leqslant \|\bm{\mathcal{R}}_{k}\|^2-\langle {\bm{\mathcal{M}}_{k,1}, \bm{\mathcal{R}}_k} \rangle^2.
\end{equation*}
For the LRAP4TC algorithm we have
\begin{equation*}
\begin{array}{lll}
\|\bm{\mathcal{R}}_{k+1} \|^2 &=& \mathop{\min}\limits_{\bm{\theta} \in \mathbb{R}^{sk}}\| \Phi^{-1} \Phi(\bm{\mathcal{Y}})-\sum_{i=1}^{k}\sum_{j=1}^{s}\theta_{i,j} \Phi^{-1} \Phi(\bm{\mathcal{M}}_{i,j}) \|^2 \\ 
&\leqslant& \mathop{\min}\limits_{\theta_{k,1}\in \mathbb{R}}\| \Phi^{-1} \Phi(\bm{\mathcal{Y}})- \bm{\mathcal{X}}_{k-1}- \theta_{k,1} \Phi^{-1} \Phi(\bm{\mathcal{M}}_{k,1}) \|^2  \\
&=& \mathop{\min}\limits_{\theta_{k,1}\in \mathbb{R}}\| \bm{\mathcal{R}}_k- \theta_{k,1} \Phi^{-1} \Phi(\bm{\mathcal{M}}_{k,1}) \|^2. \\
\end{array}
\end{equation*}
And for the  ELRAP4TS algorithm we have
\begin{equation*}
\begin{array}{lll}
\|\bm{\mathcal{R}}_{k+1} \|^2 = \\
=  \mathop{\min}\limits_{\bm{\alpha}\in \mathbb{R}^{s+1}}\| \Phi(\bm{\mathcal{Y}})- \alpha_0 \bm{\mathcal{X}}_{k-1} - \alpha_1 \Phi(\bm{\mathcal{M}}_{k,1}) -\cdots-\alpha_s \Phi(\bm{\mathcal{M}}_{k,s})  \|_\Omega^2 \\ 
  \leqslant \mathop{\min}\limits_{\alpha_1\in \mathbb{R}}\| \Phi^{-1} \Phi(\bm{\mathcal{Y}})- \bm{\mathcal{X}}_{k-1} - \alpha_1 \Phi^{-1} \Phi(\bm{\mathcal{M}}_{k,1}) \|^2  \\
= \mathop{\min}\limits_{\alpha_1\in \mathbb{R}}\|\bm{\mathcal{R}}_{k} - \alpha_1 \Phi^{-1} \Phi(\bm{\mathcal{M}}_{k,1}) \|^2 .
\\
\end{array}
\end{equation*}
In both cases we get closed form solutions for  $\theta_{k,1}^*=\frac{\langle \bm{\mathcal{R}}_{k},\Phi^{-1} \Phi(\bm{\mathcal{M}}_{k,1})\rangle}{\langle \Phi^{-1} \Phi(\bm{\mathcal{M}}_{k,1}), \Phi^{-1} \Phi(\bm{\mathcal{M}}_{k,1})\rangle}=\frac{\langle \bm{\mathcal{R}}_{k}, \bm{\mathcal{M}}_{k,1}\rangle}{\langle \Phi^{-1} \Phi(\bm{\mathcal{M}}_{k,1}), \Phi^{-1} \Phi(\bm{\mathcal{M}}_{k,1})\rangle}$ and $\alpha_1^*=\frac{\langle \bm{\mathcal{R}}_{k}, \Phi^{-1} \Phi(\bm{\mathcal{M}}_{k,1})\rangle}{\langle \Phi^{-1} \Phi(\bm{\mathcal{M}}_{k,1}), \Phi^{-1} \Phi(\bm{\mathcal{M}}_{k,1})\rangle}=\frac{\langle \bm{\mathcal{R}}_{k}, \bm{\mathcal{M}}_{k,1}\rangle}{\langle \Phi^{-1} \Phi(\bm{\mathcal{M}}_{k,1}), \Phi^{-1} \Phi(\bm{\mathcal{M}}_{k,1})\rangle}$. Plugging the optima $\theta_{k,1}^*$ and $\alpha_1^*$ back into the above formulas, we get
\begin{equation}\label{2.7}
\begin{array}{lll}
\|\bm{\mathcal{R}}_{k+1} \|^2 &\leqslant&\| \bm{\mathcal{R}}_k-\frac{\langle \bm{\mathcal{R}}_{k}, \bm{\mathcal{M}}_{k,1}\rangle}{\langle \Phi^{-1} \Phi(\bm{\mathcal{M}}_{k,1}), \Phi^{-1} \Phi(\bm{\mathcal{M}}_{k,1})\rangle} \Phi^{-1} \Phi(\bm{\mathcal{M}}_{k,1}) \|^2 \\ 
&=&  \| \bm{\mathcal{R}}_k\|^2-\frac{\langle \bm{\mathcal{R}}_{k}, \bm{\mathcal{M}}_{k,1}\rangle^2}{\langle \Phi^{-1} \Phi(\bm{\mathcal{M}}_{k,1}), \Phi^{-1} \Phi(\bm{\mathcal{M}}_{k,1})\rangle} \\ 
&=& \| \bm{\mathcal{R}}_k\|^2-\langle \bm{\mathcal{R}}_{k}, \bm{\mathcal{M}}_{k,1}\rangle^2,\\ 
\end{array}
\end{equation}
since $\langle \Phi^{-1} \Phi(\bm{\mathcal{M}}_{k,1}), \Phi^{-1} \Phi(\bm{\mathcal{M}}_{k,1})\rangle \leqslant 1$ from $\Phi \Phi^{-1}$ is an identity operator and $\Phi = \phi \cdot vec(\cdot)$.

Using  inequality (\ref{2.7}) and Lemma \ref{Lemma 2.2}, we obtain that
$$\|\bm{\mathcal{R}}_{k+1}\|^2\leqslant \|\bm{\mathcal{R}}_{k}\|^2-\langle \bm{\mathcal{R}}_{k}, \bm{\mathcal{M}}_{k,1}\rangle^2 \leqslant \left(1-\frac{1}{min(m,n)}\right)\|\bm{\mathcal{R}}_{k}\|^2.$$
This completes the proof.  $\Box$

Based on Theorem \ref{Theorem 2} some linear operators  $\Phi$ satisfies the TRIP condition with high probability. By assuming that the  TRIP condition holds, we prove the following approximation result.

\begin{theorem}\label{Theorem B1}
Let $\bm{\mathcal{Y}}$ be a tensor of tensor tubal rank $r$. Suppose the measurement mapping $\Phi(\bm{\mathcal{X}})$ satisfies TRIP for rank-$r_0$ with $\delta_{r_0}=\delta_{r_0}(\Phi)<1$ with $r_0\geqslant 2r$. The output tensor $\bm{\mathcal{M}}(\bm{\theta}^k)$ approximates the exact tensor $\bm{\mathcal{Y}}$ in the following sense: there is a positive constant $\tau$ such that
\begin{equation*}
\|\bm{\mathcal{M}}(\bm{\theta}^k)-Y\|_F\leqslant \frac{C}{\sqrt{1-\delta_{r_0}}} \tau^k
\end{equation*}
for all $k=1,\cdots,\lfloor{(r_0-r)/s}\rfloor$, where $\lfloor{(r_0-r)/s}\rfloor$ denotes the largest inter less than or equal to $r/s$ and $C>0$ is a constant depending on $\Phi$.  
\end{theorem}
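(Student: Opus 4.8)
The plan is to convert the linear decay of the measurement-domain residual $\bm{\mathcal{R}}_k$ established in Theorem \ref{Theorem 3.1} into a decay of the genuine reconstruction error $\|\bm{\mathcal{M}}(\bm{\theta}^k)-\bm{\mathcal{Y}}\|_F$, using the lower half of the TRIP inequality as the bridge between the two domains. Write $\bm{\mathcal{Z}}_k=\bm{\mathcal{M}}(\bm{\theta}^k)-\bm{\mathcal{Y}}$. The first step is a rank count: after $k$ iterations the algorithm maintains $sk$ rank-one basis tensors, so $\mathrm{rank}_t(\bm{\mathcal{M}}(\bm{\theta}^k))\le sk$; since the tubal rank is subadditive (in the Fourier domain it is the maximum over frontal slices of the matrix ranks, and matrix rank is slicewise subadditive), we get $\mathrm{rank}_t(\bm{\mathcal{Z}}_k)\le r+sk$. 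This is at most $r_0$ precisely when $sk\le r_0-r$, i.e. $k\le\lfloor(r_0-r)/s\rfloor$, which is exactly the stated range; the hypothesis $r_0\ge 2r$ serves to keep this range nonempty. The same count applies to ELRAP4TS, where one checks by induction that $\bm{\mathcal{X}}_k=\Phi^{-1}\Phi(\widehat{\bm{\mathcal{Y}}}_k)$ and $\mathrm{rank}_t(\widehat{\bm{\mathcal{Y}}}_k)\le sk$.

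With $\mathrm{rank}_t(\bm{\mathcal{Z}}_k)\le r_0$ secured, I would invoke the TRIP of Definition \ref{definition 1} applied to $\bm{\mathcal{Z}}_k$, giving $(1-\delta_{r_0})\|\bm{\mathcal{Z}}_k\|_F^2\le\|\Phi(\bm{\mathcal{Z}}_k)\|_2^2$, hence $\|\bm{\mathcal{M}}(\bm{\theta}^k)-\bm{\mathcal{Y}}\|_F\le(1-\delta_{r_0})^{-1/2}\|\Phi(\bm{\mathcal{Z}}_k)\|_2$. This already accounts for the factor $1/\sqrt{1-\delta_{r_0}}$ in the target estimate, and is the only place the low-rank structure of $\bm{\mathcal{Z}}_k$ is used.

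It remains to control $\|\Phi(\bm{\mathcal{Z}}_k)\|_2$ by the residual. From the update rule $\bm{\mathcal{X}}_k=\Phi^{-1}\Phi(\bm{\mathcal{M}}(\bm{\theta}^k))$, so $\bm{\mathcal{R}}_{k+1}=\Phi^{-1}\Phi(\bm{\mathcal{Y}})-\bm{\mathcal{X}}_k=-\Phi^{-1}\Phi(\bm{\mathcal{Z}}_k)$. Setting $\mathbf{v}=\Phi(\bm{\mathcal{Z}}_k)\in\mathbb{R}^m$ this reads $\bm{\mathcal{R}}_{k+1}=-\Phi^{-1}(\mathbf{v})$, whence $\|\bm{\mathcal{R}}_{k+1}\|_F=\|\Phi^{-1}(\mathbf{v})\|_F$. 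Because $\Phi\Phi^{-1}=I_m$ forces $\Phi^{-1}$ to be injective, its smallest singular value $\sigma_{\min}(\Phi^{-1})$ is strictly positive, so $\|\mathbf{v}\|_2\le\sigma_{\min}(\Phi^{-1})^{-1}\|\Phi^{-1}(\mathbf{v})\|_F=\sigma_{\min}(\Phi^{-1})^{-1}\|\bm{\mathcal{R}}_{k+1}\|_F$. I expect this passage between the measurement domain and the tensor domain to be the main obstacle, since it is where the non-identity operator $\Phi^{-1}$ must be handled, the inequality direction must be checked, and the $\Phi$-dependent constant is generated; one has to confirm that $\sigma_{\min}(\Phi^{-1})$ depends only on $\Phi$.

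Finally I would feed in Theorem \ref{Theorem 3.1}, which yields $\|\bm{\mathcal{R}}_{k+1}\|_F\le\tau^k\|\Phi^{-1}(\mathbf{b})\|$ with $\tau=\sqrt{1-1/\min(m,n)}<1$ and $\mathbf{b}=\Phi(\bm{\mathcal{Y}})$. Chaining the three inequalities gives $\|\bm{\mathcal{M}}(\bm{\theta}^k)-\bm{\mathcal{Y}}\|_F\le\frac{C}{\sqrt{1-\delta_{r_0}}}\,\tau^k$ with $C=\|\Phi^{-1}(\mathbf{b})\|/\sigma_{\min}(\Phi^{-1})$, a constant depending only on $\Phi$ and the data, which is the claim. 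Since Theorem \ref{Theorem 3.1} and the identity $\bm{\mathcal{R}}_{k+1}=-\Phi^{-1}\Phi(\bm{\mathcal{Z}}_k)$ both hold for the standard and economic versions, the argument covers LRAP4TS and ELRAP4TS verbatim.
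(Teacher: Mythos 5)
Your proposal is correct and follows essentially the same route as the paper's proof: the lower TRIP inequality applied to the error tensor $\bm{\mathcal{M}}(\bm{\theta}^k)-\bm{\mathcal{Y}}$ (legitimate because ${\rm rank_t}(\bm{\mathcal{M}}(\bm{\theta}^k)-\bm{\mathcal{Y}})\leqslant sk+r\leqslant r_0$), the identity $\Phi(\bm{\mathcal{M}}(\bm{\theta}^k))-\Phi(\bm{\mathcal{Y}})=-\Phi(\bm{\mathcal{R}}_{k+1})$ coming from $\Phi\Phi^{-1}=I$, and the linear residual decay of Theorem \ref{Theorem 3.1}. The only divergence is the constant bridging the measurement and tensor domains---you pull $\Phi(\bm{\mathcal{M}}(\bm{\theta}^k)-\bm{\mathcal{Y}})$ back through $\Phi^{-1}$ and use $\sigma_{\min}(\Phi^{-1})>0$, whereas the paper pushes the residual forward via $\|\Phi(\bm{\mathcal{R}})\|_2=\|\phi\cdot{\rm vec}(\bm{\mathcal{R}})\|_2\leqslant\|\phi\|_2\,\|\bm{\mathcal{R}}\|_F$, yielding $C=\|\phi\|_2\,\|\Phi^{-1}(\mathbf{b})\|_F$ in place of your $\|\Phi^{-1}(\mathbf{b})\|_F/\sigma_{\min}(\Phi^{-1})$ (the two coincide when $\Phi^{-1}$ is the pseudoinverse, and both depend only on $\Phi$ and the data)---while your explicit rank-subadditivity count and your $\bm{\mathcal{R}}_{k+1}$ indexing are actually more careful than the paper's terse ``for $s\cdot k+r\leqslant r_0$'' and its use of $\bm{\mathcal{R}}_k$.
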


\proof

Based on the definition of $\delta_{r_0}$, for $s \red{\cdot} k+r\leqslant r_0$, we have
\begin{equation*}
\begin{array}{lll}
(1-\delta_{r_0})\|\bm{\mathcal{M}}(\bm{\theta}^k)-\bm{\mathcal{Y}}\|^2_F &\leqslant&   \|\Phi(\bm{\mathcal{M}}(\bm{\theta}^k))-\Phi(\bm{\mathcal{Y}})\|^2_2   \\
       &=&    \|\Phi(\bf{\mathcal{R}}_k)\|^2_2=\|\phi \cdot vec(\bm{\mathcal{R}}_k)\|^2_2\\
       &\leqslant&\|\phi\|^2_2\|vec(\bm{\mathcal{R}}_k)\|^2_2=\|\phi\|^2_2\|\bm{\mathcal{R}}_k\|^2_F\\
       &\leqslant& \|\phi \|^2_2\tau^{2k}\|\Phi^{-1}(\mathbf{b})\|^2_F
       \end{array}
\end{equation*}
where $\tau=\sqrt{1-\frac{1}{\min(m,n)}}$ from Theorem \ref{Theorem 3.1}. So we have
\begin{equation*}
\|\bm{\mathcal{M}}(\bm{\theta}_k)-\bm{\mathcal{Y}}\|^2_F  \leqslant\frac{ \|\phi \|^2_2\tau^{2k}}{1-\delta_{r_0}}\|\Phi^{-1}(\mathbf{b})\|^2_F. 
\end{equation*}
Therefore, we have the desired result. $\Box$

The above convergence result require $s \cdot k+r\leqslant r_0$, which guarantees the TRIP condition for all estimated tensors during the iteration process.

\section{Numerical Tests and Applications }

All our methods have been implemented for tensor completion in MATLAB. All test experiments were run on a Macbook Pro with OSX High Sierra, an Intel Core i5 2.6 GHz processor and with 8G RAM. We have noted that the speed of the RAM is very important for Matlab speeds. Our experimental results are compared with respect to the root-mean-square error (RMSE), defined as $RMSE = \sqrt{|| \bm{\mathcal{X}} - \bm{\mathcal{Y}} ||^2_F / (n_1 n_2 n_3) }$. The proposed ELRAP4TC method reduces the CPU time significantly from that of LRAP4TC with a small decrease in quality.
The test results with ELRAP4TC are only  included for clarity. The t-SVD algorithm  to find the truncated t-SVD in the ELRAP4TC algorithm. For further speed up significantly,  readers might adopt the rt-SVD method instead. 

\subsection{ Video Recovery}

 A video clip, shot with a 1/50 sec or faster shutter speed every 1/25 sec contains 25 digital data frames for every second of real time.
In our video recovery example below, we use a $144 \times 256 \times 40$  black and white gray-scale "basketball" video. Our video clip depicts 1.6 sec of a game and contains  40 digital images in avi format. Each frame holds 144 by 256 pixels of raw  sensor data. To visualize, consider these data frames vertically,  40 in total, one behind the other along the time line, capturing 1.6 second of time. This visualization helps us to interpret the data as a 3-dimensional tensor. Such a video generated  tensor will have low tubal rank because  if one were to look perpendicularly to the video image data planes in the direction of time over a small area of the images themselves. Then one would (locally) see almost identical images with little changes in any one small tube. 
 Looking  at the same small cut out area for all 40 consecutive video frames, there will generally be only small changes because the angle of view does not change all that quickly, fixed objects do not move, people do not run that fast, not in a low resolution video and not over one second or two. 
Hence if some of the 40 frames are missing or damaged -- as we assume -- we would start from a  low tubal rank tensor and try  to reconstruct the video on that premise. We have set rank $r=100$ in our first experiment, i.e., we retain $50\%$ of the given pixels as known entries in $\Omega$. The recovery of the damaged video  has thus  become a tensor completion problem that we can solve  with our ELRAP4TC algorithm.  Since ELRAP4TC is always applied with $s \leqslant r$  we   work with $s=1, 2, 3$ here.

In Figure \ref{Figure 4}  the convergence characteristics are shown for the ELRAP4TC algorithm. The running times decrease in the ELRAP4TC method for increasing values of $s$.

\begin{figure}[h]
	\begin{center}
	\includegraphics[scale=0.45]{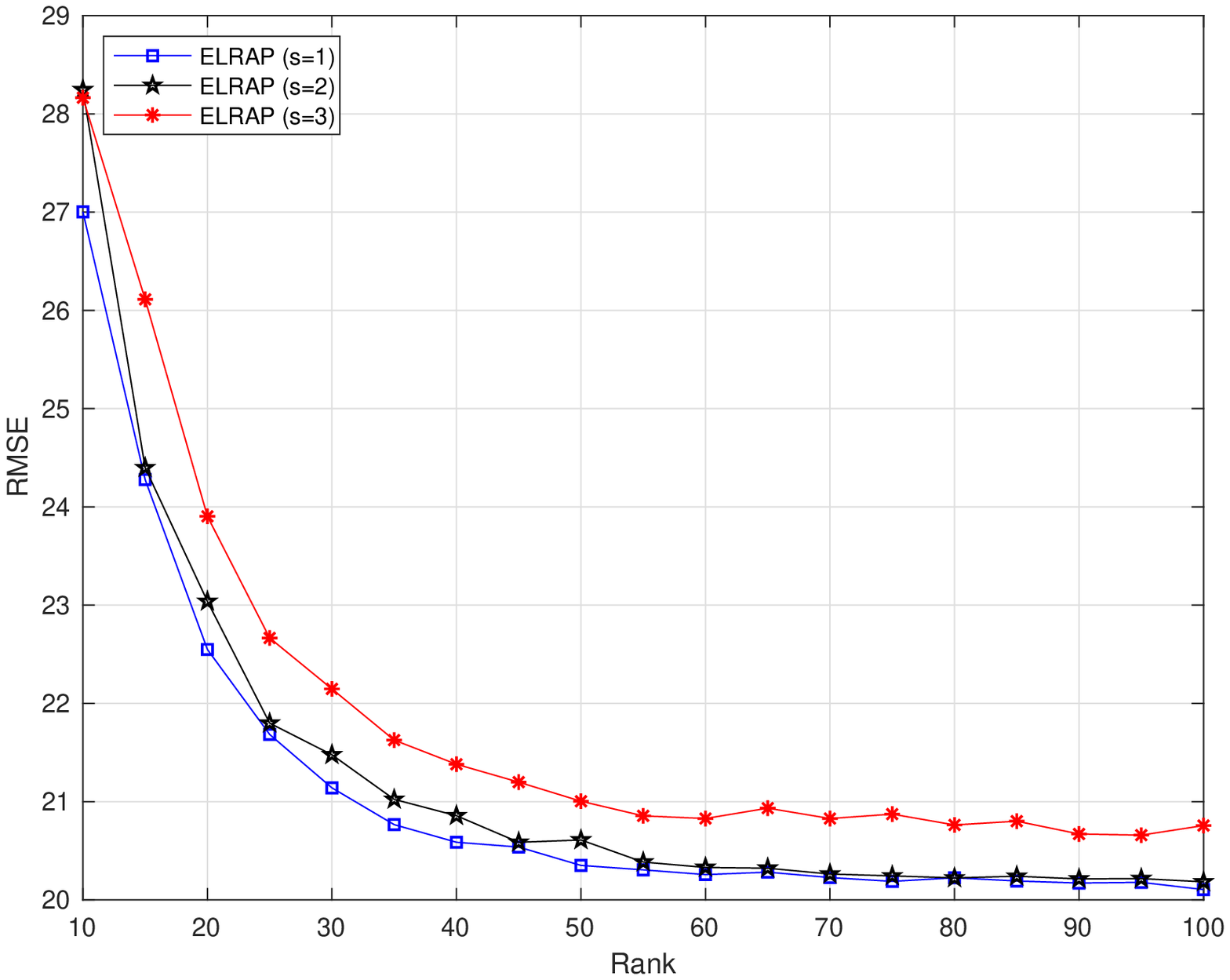}
		\includegraphics[scale=0.45]{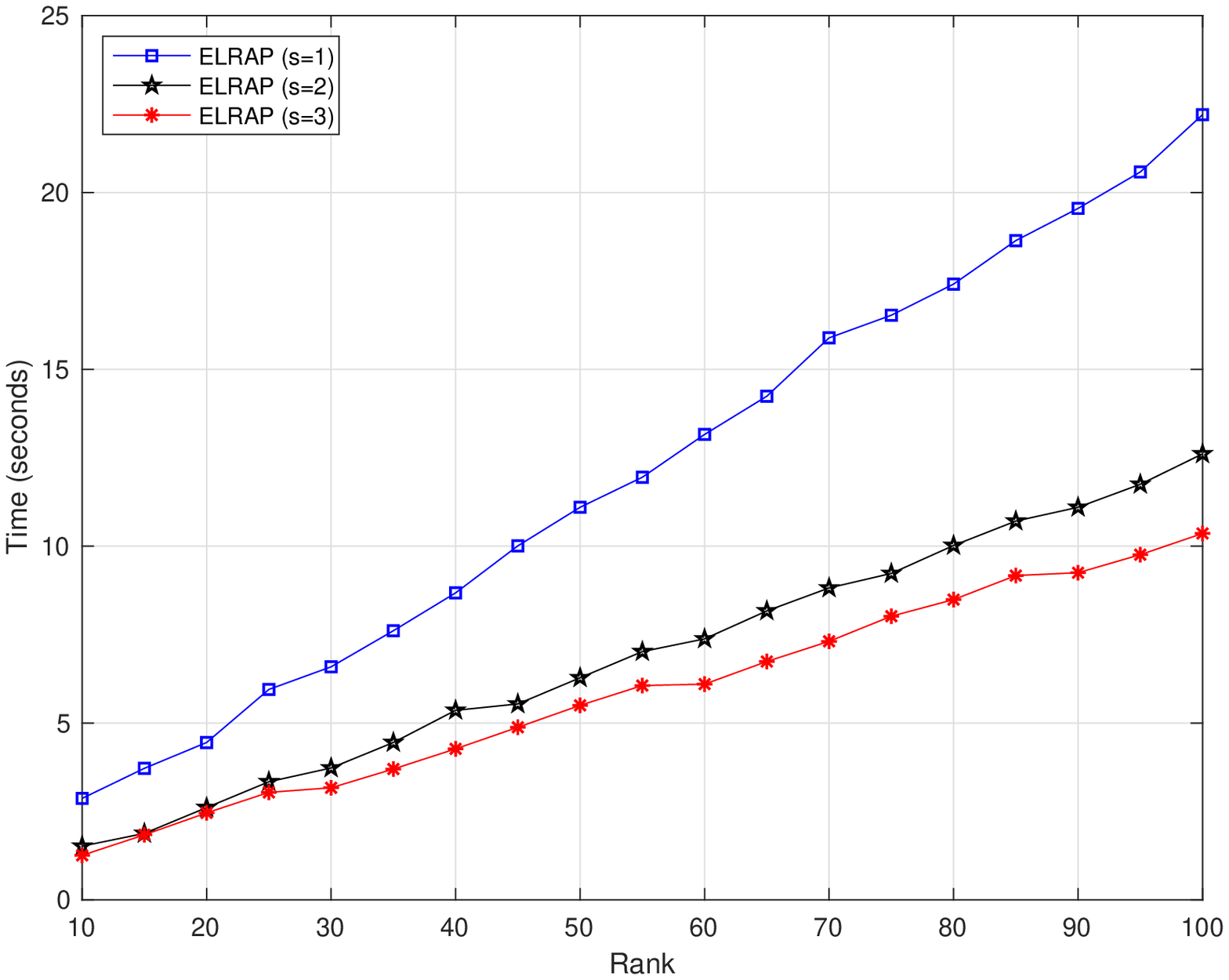}
		\caption{Linear convergence of ELRAP4TC with $s=1, 2, 3$ (on the left) and  run times  in seconds (on the right)}
		\label{Figure 4}
	\end{center}
\end{figure}

To evaluate our LRAP4TC we  compare it  with two state-of-the art algorithms such as HoMP \cite{YMS2015} and ADMM with the t-SVD as subroutine (ADMM-t-SVD) \cite{ZA2017}. The two previous algorithm own the theoretical recovery guarantee. For ADMM-t-SVD, the parameter $\lambda$  is set to $\lambda = 1/ \sqrt{3 \max{(n_1,n_2)}}$. For HoMP, we empirically set $r =100$ which is the same with our algorithm. From the numerical experiments in Section V of \cite{YMS2015},  the HoMP algorithm is the fastest state-of-the-art algorithms four years ago for the tensor completion problem. The comparison result is shown in Figure \ref{Figure 5}.

\begin{figure}[h]
	\begin{center}
\includegraphics[scale=0.36]{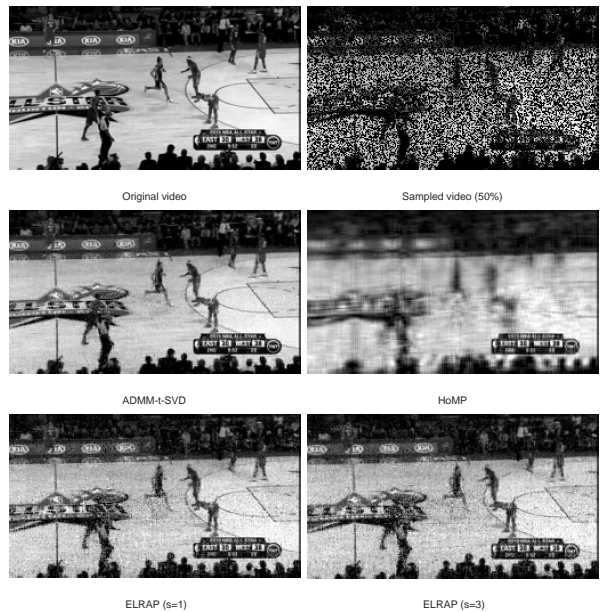}
		\caption{The 30th frame of completion result for a basketball video.}
		\label{Figure 5}
	\end{center}
\end{figure}

We measure  the running time and  RMSE for all three methods. Running time and RMSE are listed  in Table \ref{Table 2}. From these results, we show the following observations. First, these experiments show obviously that the ELRAP4TC algorithm is overall the fastest methods that offers satisfactory results. Second, ELRAP4TC outperforms HoMP in terms of their RMSE in Table \ref{Table 2}. Third, ADMM-t-SVD demands the highest cost in this experiment to the best RMSE result. These not only demonstrates the superiority of our ELRAP4TC, but also validate our recovery guarantee in Theorem \ref{Theorem 3.1} on video data.

\begin{table}[htb]
	\caption{\emph{Runing time and RMSE of tensor completion result on the basketball video}}\label{Table 2}
	\begin{center} 
		\begin{tabular}{|l|c|c|}
			\hline
			Completion Approach    & Running Time & RMSE   \\ \hline
			ADMM-t-SVD  & 124.76 & \textbf{11.7117}   \\ \hline  
			HoMP             & 38.40 & 29.6814  \\ \hline  
			ELRAP4TC (s=1)   & 17.55 & 20.2427   \\ \hline
			ELRAP4TC (s=3)   & \textbf{9.61} &  20.8546   \\  \hline
		\end{tabular}
	\end{center}
\end{table}

In the second case, we will explore the performance of our algorithm with a variety of specified missing ratio from $30\%$ to $90\%$. For each missing ratio, we test all algorithms 50 times and get the mean of their running time as vertical axis in Figure \ref{Figure 6} or their RMSE as vertical axis in Figure \ref{Figure 7}. The experiment setup is the same with the first experiment. From Figure \ref{Figure 6} and Figure \ref{Figure 7}, we can observe that the performance of all algorithms, for each specified missing ratio, are the almost same with the first experiment: ELRAP4TC is the fast method and outperforms HoMP to obtain the better accuracy solution.

\begin{figure}[h]
	\begin{center}
		\includegraphics[scale=0.45]{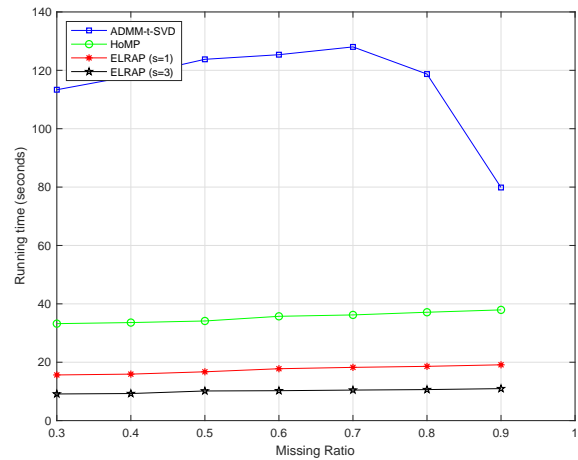}
		\caption{The trends of running times with different missing ratio.}
		\label{Figure 6}
	\end{center}
\end{figure}

\begin{figure}[h]
	\begin{center}
		\includegraphics[scale=0.45]{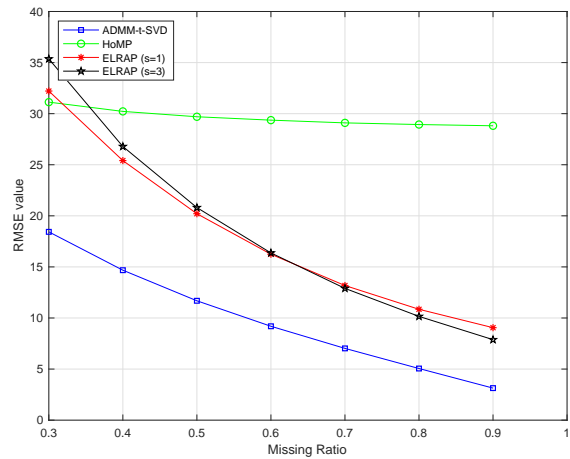}
		\caption{The trends of the RMSE values with different missing ratio.}
		\label{Figure 7}
	\end{center}
\end{figure}

In the third experiment, we give recovery performance comparison of all algorithm with a variety of frontal slice (video frame) number from $2$ to $40$. The experiment setup is the same with the first experiment. We test all algorithms 50 times, for each frontal slice number, and obtain the mean of their running time as vertical axis in Figure \ref{Figure 10} or their RMSE as vertical axis in Figure \ref{Figure 11}. Figures \ref{Figure 10} and \ref{Figure 11} show that the performances of all algorithms and each specified frontal slice generally are nearly the same as for our first experiment: namely ELRAP4TC is the fastest overall and it outperforms HoMP with better solutions for frontal slice numbers  above 8.

\begin{figure}[h]
	\begin{center}
		\includegraphics[scale=0.45]{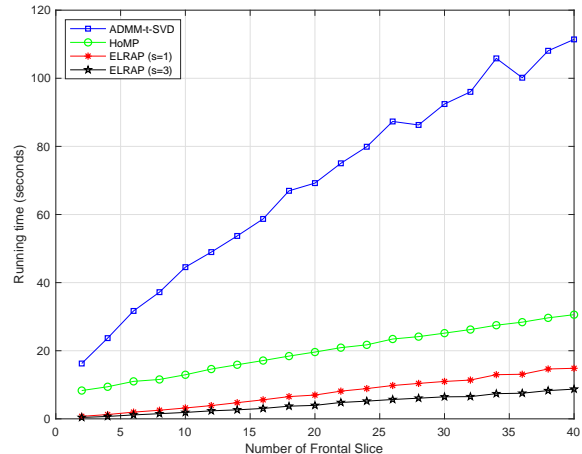}
		\caption{The trends of running times following different number of frontal slice.}
		\label{Figure 10}
	\end{center}
\end{figure}

\begin{figure}[h]
	\begin{center}
		\includegraphics[scale=0.45]{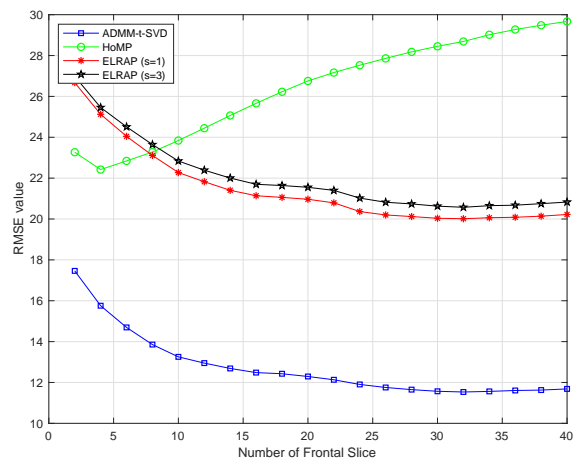}
		\caption{The trends of the RMSE values following different number of frontal slice.}
		\label{Figure 11}
	\end{center}
\end{figure}

\subsection{ Color Image Recovery}


In this subsection, we consider tensor completion based on the color image to test the stability of our algorithms. We format a $n_1 \times n_2$ sized color image as a tensor of size $n_1 \times n_2 \times 3$. Here the Matlab function, ${\rm imnoise}$, is used to generate blurring noise to the image and also add Gaussian noise with a mean zero and a standard deviation $\sigma=5e-3$. We will present that the recovery proformance of ELRAPTC is still satisfactory.

80 color images are used for the test from the Berkeley  Segmentation Dataset \cite{MFTM2001}. The sizes of images are $481 \times 321$ or $321 \times 481$. For each image, we remain $50 \%$ of the given pixels as known entries in $\Omega$ and set the desired minimal rank $r=100$. See Figure \ref{Figure 9} (b) for some sample images with noises. We compare our ELRAP4TC with ADMM-t-SVD and HoMP.

Figure \ref{Figure 8} gives the comparison of running time and RMSE on all 80 images. Some examples with the recovered images are represented in Figure \ref{Figure 9}. Based on these results, we have the observation that our ELRAP4TC is overall the fast methods to obtain reasonable solution in the presence of noise.


\begin{figure*}[ht]
	\begin{center}
		\hspace*{-25mm}		\includegraphics[scale=0.50]{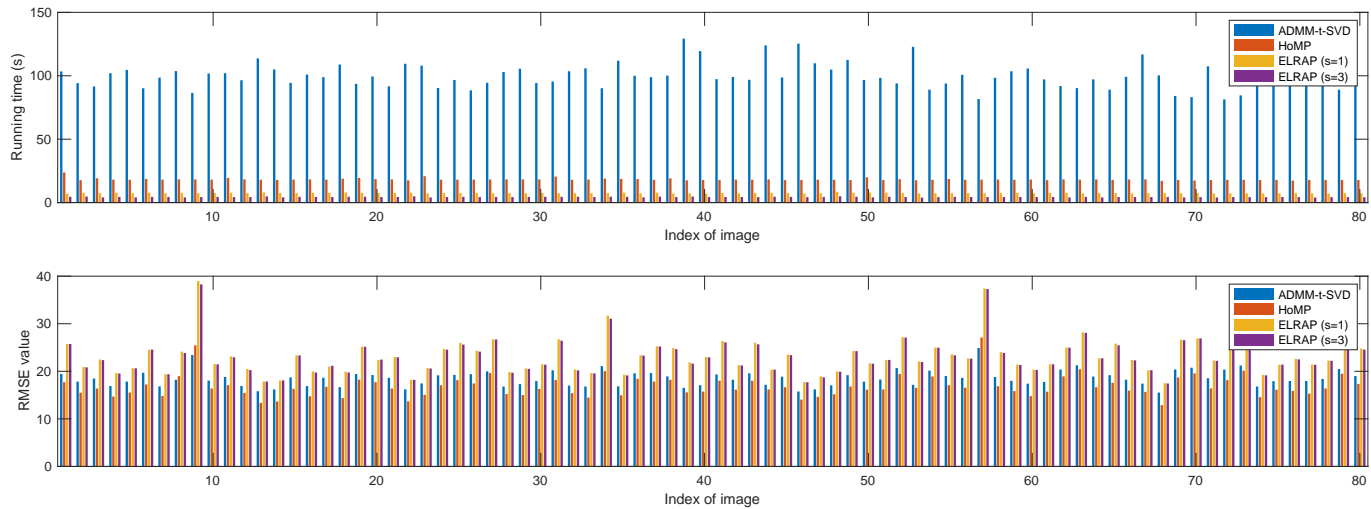}
		\caption{Comparison of running time (top) and the RMSE values (bottom) obtained by ADMM-t-SVD, HoMP, ELRAP4TC (s=1) and 	ELRAP4TC (s=3).}
		\label{Figure 8}
	\end{center}
\end{figure*}

\begin{figure*}[ht]
	\begin{center}
		\hspace*{-25mm}	\includegraphics[scale=0.78]{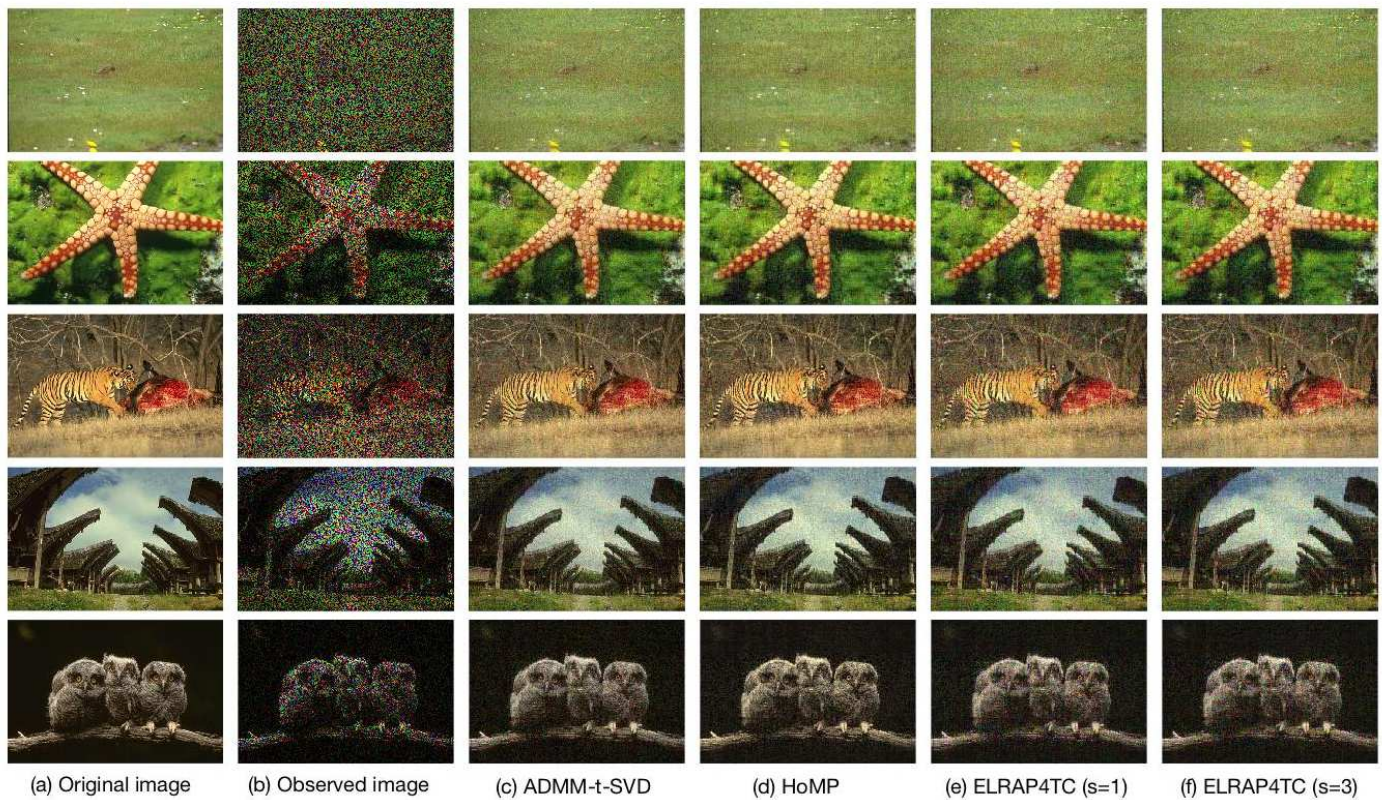}
		\caption{Recovery performance comparison on 5 example image. (1) Original image; (b) observed image; (c)-(f) recovered images by ADMM-t-SVD, HoMP, ELRAP4TC (s=1) and 	ELRAP4TC (s=3).}
		  \label{Figure 9}
	\end{center}
\end{figure*}

\begin{table}[htb]
	\caption{\emph{Comparison of running time on the 5 images in Figure \ref{Figure 9}}}\label{Table 3}
	\begin{center} 
	  \begin{tabular}{|c|c|c|c|c|}
			\hline
			Index    & ADMM-t-SVD & 	HoMP  &   \tabincell{c}{ELRAP4TC \\ (s=1)}  &  \tabincell{c}{ELRAP4TC \\ (s=3)}      \\ \hline
	    	1	 & 109.80&  22.25&     7.76&    \textbf{4.89}  \\ \hline  
		    2   & 94.21&    17.93&     7.24&     \textbf{4.50} \\ \hline  
			3   & 96.22&    18.09&     7.31&     \textbf{4.03}   \\ \hline
			4  & 97.36&   18.28&     7.23&     \textbf{4.35}  \\  \hline
			5  & 129.56&    17.93&     7.17&     \textbf{4.41}  \\  \hline
		\end{tabular}
	\end{center}
\end{table}

\begin{table}[htb]
	\caption{\emph{Comparison of RMSE on the 5 images in Figure \ref{Figure 9}}}\label{Table 3}
	\begin{center} 
		 \footnotesize{
		\begin{tabular}{|c|c|c|c|c|}
		\hline
	Index    & ADMM-t-SVD & 	HoMP  &   \tabincell{c}{ELRAP4TC \\ (s=1)} &  \tabincell{c}{ELRAP4TC \\ (s=3)}      \\ \hline
	1	 &  16.7631 &  \textbf{14.9213} &  19.1832  & 19.0462  \\ \hline  
	2   & 19.5448  & \textbf{18.2029}  & 23.2897  & 23.4329 \\ \hline  
	3   & 19.6845  & \textbf{17.7154}  & 25.3262 &  25.2199   \\ \hline
	4  & 18.9397 &  \textbf{18.0097} &  24.8622  & 24.4790 \\  \hline
	5  & 16.3832 &  \textbf{15.5059}  & 21.6744 &  21.3376  \\  \hline
		\end{tabular}
		  }
	\end{center}
\end{table}

\section{conclusion}
In this context, an efficient and scalable algorithms are proposed for  tensor completion and tensor sensing. In order to obtain the convergence of them, we define a new TRIP condition which is based on t-SVD. We show that subgaussian measurement ensemble satisfy the TRIP condition with high probability under the optimal bound on the number of measurements. Using this result, we present that both algorithms perform linear convergence rate. Numerical experiments on real datas are contained that show the accuracy and efficiency of our algorithms.

\bibliographystyle{ieeetr}
\bibliography{ref}



\end{document}